\definecolor{webgreen}{rgb}{0,.5,0}
\def\C{{\mathds{C}}}
\def\R{{\mathbb{R}}}
\def\N{{\mathds{N}}}
\def\Z{{\mathds{Z}}}
\def\1{{\bf 1}}
\def\nr{{\trianglelefteq}}
\def\id{\operatorname{id}}
\def\lcm{\operatorname{lcm}}
\newtheorem{theorem}{Theorem}
\newtheorem{corollary}{Corollary}
\newtheorem{proposition}{Proposition}
\begin{document}
	
	\title{{\bf Counting subgroups of the groups $\Z_{n_1} \times \cdots \times \Z_{n_k}$: a survey}}
	\author{L\'aszl\'o T\'oth  \\
		Department of Mathematics \\
		University of P\'ecs \\
		Ifj\'us\'ag \'utja 6, 7624 P\'ecs, Hungary \\
		E-mail: {\tt ltoth@gamma.ttk.pte.hu}}
	\date{}
	\maketitle
	
	\centerline{New Frontiers in Number Theory and Applications, Trends in Mathematics,} 
	\centerline{Birkh\"auser, Cham, 2024, Guàrdia, J., Minculete, N., Savin, D., Vela, M.,} 
	\centerline{Zekhnini, A. (eds), pp. 385--409} 
	\centerline{\url{https://doi.org/10.1007/978-3-031-51959-8_18}}
	
	\begin{abstract}
		We present a survey of exact and asymptotic formulas on the number 
		of cyclic subgroups and total number of subgroups of the groups $\Z_{n_1} \times \cdots \times \Z_{n_k}$, 
		where $k\ge 2$ and $n_1,\ldots,n_k$ are arbitrary positive integers.
	\end{abstract}
	
	{\sl 2010 Mathematics Subject Classification}: 20K01, 20K27, 05A15, 11A25, 11N37
	
	{\sl Key Words and Phrases}: finite abelian group, subgroup, number of subgroups,
	number of cyclic subgroups, Goursat's lemma, asymptotic formula
	
	\tableofcontents
	
	\section{Introduction}
	
	Throughout this chapter we use the following notation: $\N:=\{1,2,\ldots\}$, $\N_0:=\N \cup \{0\}$; 
	the prime power factorization of $n\in \N$ is $n=\prod_p p^{\nu_p(n)}$, where the product is over the primes $p$ and all but a finite number of the exponents $\nu_p(n)$ are zero; 
	$\Z_n$ denotes the additive group of residue classes modulo $n$ (the cyclic group of order $n\in
	\N$); $(n_1,\ldots,n_k)$ and $\gcd(n_1,\ldots,n_k)$ denote the greatest common divisor of 
	$n_1,\ldots,n_k\in \N$, $[n_1,\ldots,n_k]$ and $\lcm(n_1,\ldots,n_k)$ denote the least common
	multiple of $n_1,\ldots,n_k\in \N$; $\id_t(n)=n^t$, $\sigma_t(n)=\sum_{d\mid n} d^t$ ($n\in \N, t\in \C)$; 
	$\sigma_0(n)=\tau(n)$ and $\sigma_1(n)=\sigma(n)$ denote the number and the sum of (positive) divisors of 
	$n\in \N$; $\varphi$ is Euler's arithmetic function; $\mu$ is the M\"obius 
	function; $f*g$ is the convolution of the functions $f,g:\N \to \C$; $\zeta$ is the Riemann zeta function;
	the sums $\sum_p$ and products $\prod_p$ and $\bigtimes_p$ are taken over the primes $p$.
	
	Let $G$ be a finite abelian group of order $n$. Let $s(G)$ and $c(G)$ denote the total number of subgroups\index{number of subgroups}
	of $G$ and the number of its cyclic subgroups\index{number of cyclic subgroups}, respectively. Also, let
	$G= \bigtimes_p G_p$ be the primary decomposition of $G$, where
	$|G_p| = p^{\nu_p(n)}$ ($p$ prime). Then
	\begin{equation} \label{prod_N}
		s(G)=\prod_p s(G_p), \qquad c(G)=\prod_p c(G_p).
	\end{equation}
	
	Therefore, the problem of counting the subgroups of $G$ reduces to $p$-groups. Formulas for the total number of subgroups
	of a given type of an abelian $p$-group were established by Bhowmik \cite{Bho1996}, Delsarte \cite{Del1948}, Miller \cite{Mil1904,Mil1939}, Shokuev 
	\cite{Sho1973}, Yeh \cite{Yeh1948}, and others. Also see 
	the papers by Stehling \cite{Ste1992} and Suzuki \cite{Suz1951}, and the monographs by Butler \cite{But1994} and Schmidt \cite{Sch1994}. One of these 
	formulas is in terms of the gaussian coefficients $\left[r\atop k\right]_p= \prod_{i=1}^k \frac{p^{r-k+i}-1}{p^i-1}$. Namely, let $G_{(p)}$ be a $p$-group 
	of rank $r$ and type $\lambda= (\lambda_1,\ldots,\lambda_r)$, that is,
	\begin{equation} \label{p_group_type}
		G_{(p)}\simeq \Z_{p^{\lambda_1}}\times \cdots \times \Z_{p^{\lambda_r}},
	\end{equation}
	with $\lambda_1\ge \ldots \ge \lambda_r\ge 1$, where $\lambda$ is a partition of $|\lambda|=\lambda_1+\cdots +\lambda_r$. 
	Then the number $s_{\mu}(G_{(p)})$ of
	subgroups of type $\mu$ (with $\mu \preceq \lambda$, meaning that $\mu_i\le \lambda_i$ for $1\le i\le r$) of $G_{(p)}$ is
	\begin{equation*} 
		s_{\mu}(G_{(p)})= \prod_{j=1}^{\lambda_1}
		p^{\mu'_{j+1}(\lambda'_j-\mu'_j)} \left[{\lambda'_j-\mu'_{j+1} \atop
			\mu'_j-\mu'_{j+1}} \right]_p,
	\end{equation*}
	where $\lambda'$ and $\mu'$ are the conjugates (according to the
	Ferrers diagrams) of $\lambda$ and $\mu$, respectively. Hence
	$s_{\mu}(G_{(p)})$ is a polynomial in $p$, with integer
	coefficients, depending only on $\lambda$ and $\mu$ (it is a sum of Hall polynomials).
	
	It follows that the total number of subgroups of $G_{(p)}$ is
	\begin{equation*} 
		s(G_{(p)}) = \sum_{0\le k\le |\lambda|} \sum_{\substack{\mu\preceq \lambda\\ |\mu|=k}}
		s_{\mu}(G_{(p)}),
	\end{equation*}
	and as a consequence, for abelian $p$-groups the total number of subgroups and the number of subgroups 
	of a given order are polynomials in $p$ with integer coefficients. In fact, it can be shown that 
	all coefficients of these polynomials are non-negative. However, it is a difficult task to make explicit these 
	polynomials, even in the case of $p$-groups of small rank. 
	
	There are more simple formulas for the number of cyclic subgroups of $p$-groups.
	Consider the $p$-group $G_{(p)}$ given by \eqref{p_group_type}. 
	The number of cyclic subgroups of order $p^{\nu}$
	of $G_{(p)}$ is
	\begin{equation*} 
		c_{p^{\nu}}(G_{(p)})= \frac{p^{j_{\nu}}-1}{p-1} p^{\lambda_{j_{\nu}
				+1}+\cdots+ \lambda_r+ \lambda_{r+1}+ (\nu-1)(j_{\nu}-1)},
	\end{equation*}
	where $\lambda_{j_{\nu}} \ge \nu > \lambda_{j_{\nu}+1}$ and
	$\lambda_{r+1}=0$. See Miller \cite{Mil1939}, Yeh \cite{Yeh1948}.
	
	If $G$ is a finite abelian group, then one can define the generalized counting function
	\begin{equation} \label{def_sigma_t}
		\sigma_t(G)=\sum_{H\le G} |H|^t,
	\end{equation}
	where the sum is over all subgroups $H$ of $G$ and $t\in \C$. If $t=0$, then $\sigma_0(G)=s(G)$,
	the number of all subgroups of $G$. Ramar\'e \cite{Ram2017} investigated the function $\sigma_t(G)$ 
	in the case when $G$ is a $p$-group of type \eqref{p_group_type}, proved an involved but fully 
	explicit formula for $\sigma_t(G)$, where $G$ is an arbitrary abelian $p$-group, and deduced 
	the rationality of a related generating series.
	
	Instead of $p$-groups we prefer to consider the group $G: =\Z_{n_1}\times \cdots \times \Z_{n_k}$, where $n_1,\ldots,n_k$ are arbitrary positive
	integers. Let $s(n_1,\ldots,n_k)$ and $c(n_1,\ldots,n_k)$ stand for the total number
	of subgroups of $G$ and the number of its cyclic subgroups, respectively. 
	According to \eqref{prod_N}, the functions $s(n_1,\ldots,n_k)$ and $c(n_1,\ldots,n_k)$ are multiplicative 
	functions of $k$ variables. Therefore, $s(n,\ldots,n)$ and $c(n,\ldots,n)$ are multiplicative in $n$, as 
	functions of a single variable. See the survey of the author \cite{Tot2014} on general properties of 
	multiplicative arithmetic functions of several variables. As well known, in the case $k=1$
	one has $s(n)=c(n)=\tau(n)$ for every $n\in \N$, $\tau(n)$ denoting the number of divisors of $n$.
	Also note that $\sigma_t(\Z_n)=\sum_{d\mid n} d^t =\sigma_t(n)$.
	
	Here we present a survey concerning the total number of subgroups (of a given order) and the number of cyclic subgroups (of a given order) of the group $\Z_{n_1}\times \cdots \times \Z_{n_k}$, where $k\ge 2$. Some new results are also included. We consider, in particular, 
	the cases $k=2$, $k=3$ and $k=4$, and present exact and asymptotic formulas on the functions 
	$s(n_1,\ldots,n_k)$ and $c(n_1,\ldots,n_k)$, where $n_1,\ldots,n_k\in \N$. We also discuss the representation of the subgroups in the cases $k=2$ and $k=3$, and offer some new results for $\sigma_t(G)$ in the case when $G=\Z_m \times \Z_n$ 
	($m,n\in \N$). We point out that there are known asymptotic formulas for the sums $\sum_{n_1,\ldots,n_k\le x} 
	c(n_1,\ldots,n_k)$ with $k\ge 2$, see Section \ref{Section_Number_cyclic} , and for $\sum_{n_1,n_2\le x} s(n_1,n_2)$, 
	see Section \ref{Section_k_2}. However, no asymptotic formula 
	for $\sum_{n_1,\ldots,n_k\le x} s(n_1,\ldots,n_k)$ is known the literature in the case $k\ge 3$.

	The new results are given together with their proofs. These are Propositions \ref{Th_cyclic_k}, \ref{lemma_quotient}, 
	Theorems \ref{Th_sigma_t}, \ref{Th_sigma_t_Dirichlet}, \ref{Theorem_subgroups}, \ref{Theorem_subgroups_order_k} and Corollaries \ref{Cor_subgroups}, 
	\ref{Cor_subgroups_order_k}. To be precise, Proposition \ref{lemma_quotient} and the results of Section \ref{Section_k_4} are included in the arXiv 
	preprint by the author \cite{Tot2016}, but have not been published elsewhere.

	\section{Number of cyclic subgroups} \label{Section_Number_cyclic}
	
	For $c(n_1,\ldots,n_k)$, the number of cyclic subgroups of the group $\Z_{n_1}\times \cdots \times \Z_{n_k}$ we have the 
	following compact formula, valid for every $k\ge 2$ and every $n_1,\ldots, n_k\in \N$: 
	\begin{equation} \label{total_number_cyclic_subgroups}
		c(n_1,\ldots,n_k)= \sum_{d_1\mid n_1,\ldots,d_k\mid n_k}
		\frac{\varphi(d_1) \cdots \varphi(d_k)}{\varphi(\lcm(d_1,\ldots,d_k))},
	\end{equation}
	where $\varphi$ is Euler's function. Formula \eqref{total_number_cyclic_subgroups} was derived by the author in \cite{Tot2011} using the orbit counting lemma (Burnside's lemma) and in
	\cite{Tot2012} by some simple number-theoretic arguments.
	
	Consequently, the number of all cyclic subgroups of an arbitrary finite abelian
	group $G$ can be described by formula \eqref{total_number_cyclic_subgroups}
	in a compact form. Namely, use the primary decomposition of $G$, mentioned above, or consider the invariant factor
	decomposition of $G$ given by
	\begin{equation} \label{invariant_decomp}
		G\simeq \Z_{n_1} \times \cdots \times \Z_{n_r}, 
	\end{equation}
	with $n_1,\ldots,n_r \in \N \setminus \{1\}$, $n_j \mid n_{j+1}$ ($1\le j\le r-1$).
	
	Note that if $|G|>1$, then the number $r$ in \eqref{invariant_decomp} is uniquely
	determined, it represents the minimal number of generators of the group $G$, and is called the rank of $G$.
	In the case of abelian $p$-groups this notion of rank recovers the rank mentioned above.
	
	As discussed in the Introduction, $c(n_1,\ldots,n_k)$ is a multiplicative function of $k$
	variables. This property is a direct consequence of convolutional formula \eqref{total_number_cyclic_subgroups}.
	
	Let $c_{\delta}(n_1,\ldots,n_k)$ denote the number of cyclic
	subgroups of order $\delta$ of the group $\Z_{n_1}\times \cdots \times \Z_{n_k}$. For every $n_1,\ldots,n_k\in \N$ and $\delta \mid \lcm(n_1,\ldots,n_k)$
	we have, see T\'oth \cite[Th.\ 2]{Tot2012}, 
	\begin{align*}
		c_{\delta}(n_1,\ldots,n_k) & = \frac1{\varphi(\delta)} \sum_{e\mid \delta}  \gcd(e,n_1) \cdots
		\gcd(e,n_k) \mu(\delta/e) \\
		& = \frac1{\varphi(\delta)} \sum_{\substack{d_1\mid n_1,\ldots, d_k\mid n_k
				\\ \lcm(d_1,\ldots,d_r)=\delta}} \varphi(d_1) \cdots \varphi(d_k).
	\end{align*}
	
	In the case $k=2$, identity \eqref{total_number_cyclic_subgroups}
	reduces to
	\begin{equation} \label{repr_c} 
		c(n_1,n_2)= \sum_{d_1\mid n_1,d_2\mid n_2} \varphi(\gcd(d_1,d_2)),
	\end{equation}
	which can also be written as 
	\begin{equation*} 
		c(n_1,n_2) = \sum_{d \mid \gcd(n_1,n_2)} (\mu*\varphi)(d) \tau(n_1/d)\tau(n_2/d).
	\end{equation*} 
	
	Note that for every prime $p$ and $1\le a\le b$ we have 
	\begin{equation*}
		c(p^a,p^b)= 2(1+p+p^2+\cdots +p^{a-1})+(b-a+1)p^a,
	\end{equation*} 
	and \eqref{repr_c} leads to the Dirichlet series
	representation
	\begin{equation*}
		\sum_{n_1,n_2=1}^{\infty} \frac{c(n_1,n_2)}{n_1^{s_1}n_2^{s_2}} = \frac{\zeta^2(s_1)\zeta^2(s_2)\zeta(s_1+s_2-1)}{\zeta^2(s_1+s_2)}.    
	\end{equation*}
	
	By using a variant of the hyperbola method, Nowak and T\'oth \cite{NowTot2013,NowTot2014} proved the asymptotic formula
	\begin{equation}  \label{asympt_c_2}
		\sum_{n_1,n_2\le x} c(n_1,n_2) =x^2 \sum_{j=0}^3 B_j (\log x)^j +
		O\left( x^{\frac{1117}{701}+\varepsilon}\right), 
	\end{equation}
	where $B_3= 12/\pi^4$, $B_2,B_1,B_0$ are explicit constants, and $1117/701\doteq 1.593437$. They also proved a similar formula with the same error term for the sum $\sum_{\substack{n_1,n_2\le x, (n_1,n_2)>1}} c(n_1,n_2)$ concerning the number of cyclic subgroups of the
	groups $\Z_{n_1}\times \Z_{n_2}$ having rank two, and pointed out that this error can be
	replaced by $O\left(x^{\frac{3-\theta}{2-\theta}+\varepsilon}\right)$, where $\theta$ is the exponent in Dirichlet's divisor
	problem. The $O$-term stated above was obtained from the sharpest result known at the time due to Huxley, namely
	from $\theta= 131/416$. It was pointed out by T\'oth and Zhai \cite{TotZha2018} that asymptotic formula \eqref{asympt_c_2} holds for the slight better exponent $4427/2779\doteq 1.593019$ by using the exponent $\theta=517/1648$ obtained by Bourgain and Watt \cite{BW}\footnote{This paper has been withdrawn by the second author in July 2023.}. Note that the limit of this approach is $11/7\doteq 1.571428$ with $\theta=1/4$.
	
	T\'oth and Zhai \cite{TotZha2018} proved, by using a two dimensional Perron formula and the complex integration method that
	the error term of \eqref{asympt_c_2} can be refined into $O\left(x^{3/2} (\log x)^{13/2}\right)$.
	Concerning the case $k=3$, the same authors \cite{TotZha2020} proved, again by using a multidimensional Perron formula and the complex integration method that
	\begin{equation*}  \label{asympt_c_3}
		\sum_{n_1,n_2,n_3\le x} c(n_1,n_2,n_3) =x^3 \sum_{j=0}^7 C_j (\log x)^j +
		O\left( x^{8/3+\varepsilon} \right), 
	\end{equation*}
	where $C_j$ ($0\le j \le 7$) are some explicit constants.
	
	As a consequence of certain more general results, it was obtained by Essouabri, Salinas Zavala 
	and T\'oth \cite[Cor.\ 5]{EST2022}, based on identity \eqref{total_number_cyclic_subgroups}, 
	by deep analytic methods that for every
	$k\ge 2$,
	\begin{equation} \label{asympt_c_k}
		\sum_{n_1,\ldots,n_k\le x} c(n_1,\ldots,n_k) = x^k Q(\log x) + O\left(x^{k-\beta}\right),
	\end{equation}
	where $Q(t)$ is a polynomial in $t$ of degree $2^k-1$ and $\beta$ is a positive real number.
	More exactly, by considering a large class of mutivariable multiplicative functions $f: \N^k\to \R_+$,
	it was proved in \cite{EST2022} the existence of the meromorphic continuation and some related 
	properties of the associated 
	multiple Dirichlet series 
	\begin{equation*}
		\sum_{n_1,\ldots,n_k=1}^{\infty} \frac{f(n_1,\ldots,n_k)}{n_1^{s_1}\cdots n_k^{s_k}},
	\end{equation*}
	and then formula \eqref{asympt_c_k} was deduced by combining those results and La Bret\`eche's multivariable 
	Tauberian theorems \cite[Ths.\ 1, 2]{bretechecompo}.
	
	In paper \cite{EST2022} a result similar to \eqref{asympt_c_k}, but with respect to the H\"older norm condition $(n_1^s+\cdots +n_k^s)^{1/s}\le x$, where $s\ge 1$ is a fixed real number, was also deduced. 
	Sui and Liu \cite{SL2020} obtained an asymptotic formula for the hyperbolic summation $\sum_{n_1n_2\le x} c(n_1,n_2)$, and studied the order of magnitude of the corresponding error term. Kiuchi and Saad Eddin \cite{KS2020} deduced an asymptotic formula for the weighted sum $\sum_{n_1n_2\le x} c(n_1,n_2)\log \frac{x}{n_1n_2}$.
	
	\section{Number of cyclic subgroups in the case $n_1=\cdots =n_k$}
	
	Let $f_k(n)$ denote the number of subgroups of index $n$ of the group $\Z^k$ (sublattices of the lattice $\Z^k$).
	It is known in the literature that for every $k,n \in \N$ one has 
	\begin{equation*}
		f_k(n) = \sum_{d_1d_2\cdots d_k=n} d_1^0d_2^1d_3^2\cdots d_k^{k-1},
	\end{equation*}
	\begin{equation*}
		\sum_{n=1}^{\infty} \frac{f_k(n)}{n^s} = \zeta(s)\zeta(s-1)\cdots \zeta(s-k+1).
	\end{equation*}
	
	These identities (having applications in crystallography) lead to the asymptotic formula
	\begin{equation} \label{asympt_f_k}
		\sum_{n\le x} f_k(n)= \frac{\zeta(2)\zeta(3)\cdots \zeta(k)}{k} x^k + O\left(x^{k-1} (\log x)^{2/3} \right),
	\end{equation}
	valid for every $k\ge 2$. See Grady \cite[Lemma 1]{Gra2005} (case $k=2$), Gruber \cite{Gru1997}, Knopfmacher \cite[Corollary\, (2.13)]{Kno1985}, Zou \cite{Zou2006}. 
	Note that the error term in \eqref{asympt_f_k} comes from the error due to Walfisz concerning the function $\sigma(n)=\sum_{d\mid n} d$, namely
	\begin{equation} \label{error_sigma}
		\sum_{n \le x} \sigma(n)= \frac{\pi^2}{12} x^2+O\left(x(\log x)^{2/3}\right).
	\end{equation}
	
	Now let $c_k(n)$ denote the number of cyclic subgroups of the group $\Z_n^k=\Z_n \times \cdots \times \Z_n $ ($k$ times). 
	Here $c_k(n)$ is multiplicative in $n$, and we have, see Haukkanen and Sivaramakrishnan \cite[Th.\ 4]{HauSiv1994}),
	\begin{equation} \label{c_k_n}
		c_k(n)= \sum_{d\mid n} \frac{\varphi_k(d)}{\varphi(d)},
	\end{equation}
	where $\varphi_k(n)= n^k \prod_{p\mid n} (1-1/p^k)$ is the Jordan function of order $k$ and
	$\varphi(n)=\varphi_1(n)$ is Euler's function. Note that \eqref{c_k_n} also follows
	from identity \eqref{total_number_cyclic_subgroups}.  Also see Schulte \cite{Sch1999}.
	
	In the case $k=2$ one has $c_2(n)=\sum_{d\mid n} \psi(d)$, where $\psi(n)= n \prod_{p\mid n} (1+1/p)$ is the Dedekind function.
	It was mentioned  in our arXiv preprint \cite{NowTot2013} that
	\begin{equation*}
		\sum_{n\le x} c_2(n)= \frac{5}{4}x^2 + O\left( x(\log x)^{5/3}\right),
	\end{equation*}
	the error term coming from the result of Walfisz for the function $\psi$ (the same error as in 
	\eqref{error_sigma}).
	
	Now we prove the following result, which is similar to \eqref{asympt_f_k}.
	
	\begin{proposition} \label{Th_cyclic_k} For every fixed $k\ge 3$,
		\begin{equation*}
			\sum_{n\le x} c_k(n)= \frac{\zeta(k)}{k}H(k)x^k + O\left(x^{k-1} \log x \right),
		\end{equation*}
		where
		\begin{equation*}
			H(k)= \prod_p \left(1+\frac1{p^2}+\frac1{p^3}+\cdots +\frac1{p^k}\right).
		\end{equation*}
	\end{proposition}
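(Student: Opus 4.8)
The plan is to factor $c_k$ as a Dirichlet convolution in which one factor has a well-understood summatory function, and then sum elementarily.

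First I would use $c_k=\mathbf 1*g$ with $g(n)=\varphi_k(n)/\varphi(n)$, which is multiplicative with $g(p^j)=p^{(j-1)(k-1)}\,\tfrac{p^k-1}{p-1}$ for $j\ge 1$. Summing the resulting geometric series in each Euler factor gives, for $\RE(s)$ large,
\[
\sum_{n=1}^{\infty}\frac{c_k(n)}{n^s}=\zeta(s)\,\zeta(s-k+1)\prod_p\Bigl(1+p^{-s}+p^{1-s}+\cdots+p^{k-2-s}\Bigr).
\]
Here $\zeta(s)\zeta(s-k+1)$ is the Dirichlet series of $\sigma_{k-1}=\mathbf 1*\id_{k-1}$, and the remaining Euler product equals $\prod_p(1+h(p)p^{-s})$ with $h(p)=1+p+\cdots+p^{k-2}=\tfrac{p^{k-1}-1}{p-1}$; thus $c_k=\sigma_{k-1}*h$, where $h$ is the multiplicative function supported on squarefree integers with the stated values $h(p)$. (This convolution identity can equally well be verified directly on prime powers, avoiding Dirichlet series altogether.) Evaluating the last Euler product at $s=k$ identifies $\sum_{n\ge 1}h(n)n^{-k}=\prod_p(1+p^{-2}+p^{-3}+\cdots+p^{-k})=H(k)$, a convergent product.

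Next I would write $\sum_{n\le x}c_k(n)=\sum_{d\le x}h(d)\sum_{m\le x/d}\sigma_{k-1}(m)$ and feed in the elementary estimate $\sum_{m\le y}\sigma_{k-1}(m)=\tfrac{\zeta(k)}{k}y^k+O(y^{k-1})$, obtained from $\sigma_{k-1}=\mathbf 1*\id_{k-1}$ by a hyperbola-type argument; this is exactly where the hypothesis $k\ge 3$ enters, since it guarantees convergence of $\sum_a a^{-(k-1)}$ (for $k=2$ one would only get $O(y\log y)$). Since $h\ge 0$, this yields
\[
\sum_{n\le x}c_k(n)=\frac{\zeta(k)}{k}\,x^k\sum_{d\le x}\frac{h(d)}{d^k}+O\!\left(x^{k-1}\sum_{d\le x}\frac{h(d)}{d^{k-1}}\right).
\]
To finish, I would estimate the two $d$-sums. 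On squarefree $d$ one has $h(d)/d^{k-1}=\prod_{p\mid d}(p^{-1}+\cdots+p^{-(k-1)})$, so $\sum_{d\le x}h(d)/d^{k-1}\le\prod_{p\le x}(1+p^{-1}+\cdots+p^{-(k-1)})\ll\log x$ by Mertens' theorem (the $p$-th factor being $1+p^{-1}+O(p^{-2})$). Partial summation with this bound gives $\sum_{d>x}h(d)/d^k\ll(\log x)/x$, hence $\sum_{d\le x}h(d)/d^k=H(k)+O((\log x)/x)$. Combining the two estimates delivers the asymptotic formula with error term $O(x^{k-1}\log x)$.

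The main obstacle is the first step. A crude use of $c_k=\mathbf 1*g$ together with $\lfloor x/d\rfloor=x/d+O(1)$ fails, because $\sum_{d\le x}g(d)$ is already of order $x^k$ and would swamp the main term. Splitting off the extra factor $\id_{k-1}$ — equivalently, recognising the $\sigma_{k-1}$ skeleton inside the Dirichlet series of $c_k$ — is what reduces the error to the size of $x^{k-1}\sum_{d\le x}h(d)/d^{k-1}$; and the fact that $h$ is concentrated on squarefree integers is precisely what keeps that remaining sum down to a single power of $\log x$ rather than a higher one.
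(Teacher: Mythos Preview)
Your proposal is correct and follows essentially the same route as the paper: factor the Dirichlet series as $\zeta(s)\zeta(s-k+1)H(s)$, write $c_k=\sigma_{k-1}*h$ with $h$ squarefree-supported, insert the elementary estimate for $\sum_{m\le y}\sigma_{k-1}(m)$, and bound $\sum_{d\le x}h(d)/d^{k-1}\ll\log x$ via Mertens. The only cosmetic difference is the tail estimate $\sum_{d>x}h(d)/d^{k}\ll(\log x)/x$: the paper obtains it from the pointwise bound $h(n)\ll 2^{\omega(n)}n^{k-2}$, whereas you deduce it by partial summation from the Mertens bound you already have---both are routine and equivalent in strength.
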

	
	\begin{proof} We use standard arguments. The Dirichlet series of $c_k(n)$ is, by \eqref{c_k_n}, 
		\begin{equation*}
			\sum_{n=1}^{\infty} \frac{c_k(n)}{n^s} = \zeta(s) \prod_p \sum_{\nu=0}^{\infty} \frac{\varphi_k(p^\nu)}{\varphi(p^\nu) p^{\nu s}}
		\end{equation*}
		\begin{equation*}
			= \zeta(s)\zeta(s-k+1) H(s), 
		\end{equation*}
		where
		\begin{equation*}
			H(s)= \sum_{n=1}^{\infty} \frac{h(n)}{n^s} = \prod_p \left( 1+ \frac1{p^{s-k+2}}+\frac1{p^{s-k+3}}+\cdots +\frac1{p^s} \right)
		\end{equation*}
		is absolutely convergent for $\Re s>k-1$. Hence
		\begin{equation*}
			c_k(n)=  \sum_{de=n} h(d)\sigma_{k-1}(e),
		\end{equation*}
		with $\sigma_{k-1}(n)=\sum_{d\mid n} d^{k-1}$. We have 
		\begin{equation*}
			\sum_{n\le x} c_k(n) = \sum_{d\le x} h(d) \sum_{e\le x/d} \sigma_{k-1}(e)=
			\sum_{d\le x} h(d) \left(\frac{\zeta(k)}{k}\left(\frac{x}{d}\right)^k+ O\left(\left(\frac{x}{d}\right)^{k-1}\right) \right)
		\end{equation*}
		\begin{equation*}
			= \frac{\zeta(k)}{k} x^k \sum_{d=1}^{\infty} \frac{h(d)}{d^k} + O\left(x^k \sum_{d>x} \frac{|h(d)|}{d^k} \right)+
			O\left(x^{k-1} \sum_{d\le x} \frac{|h(d)|}{d^{k-1}} \right).
		\end{equation*}
		
		The sum of the series in the main term is $H(k)$. Also, $h(p)=\frac{p^{k-1}-1}{p-1}\le 2p^{k-2}$ for any prime $p$ and $h(p^\nu)=0$ for any prime power 
		$p^\nu$ with $\nu \ge 2$. So, $h(n)\ll 2^{\omega(n)}n^{k-2}$, where $\omega(n)=\sum_{p\mid n} 1$. This gives
		\begin{equation*}
			\sum_{d>x} \frac{|h(d)|}{d^k} \ll \sum_{d>x} \frac{2^{\omega(d)}}{d^2}\ll \frac{\log x}{x}.
		\end{equation*}
		
		Furthermore,
		\begin{equation*}
			\sum_{d\le x} \frac{|h(d)|}{d^{k-1}} \le \prod_{p\le x} \sum_{\nu=0}^{\infty} \frac{|h(p^\nu)|}{p^{\nu(k-1)}}= 
			\prod_{p\le x} \left(1+ \frac{|h(p)|}{p^{k-1}}\right)
		\end{equation*}
		\begin{equation*}
			= \prod_{p\le x} \left(1+\frac1{p}+\frac1{p^2}+\cdots + \frac1{p^{k-1}}\right)\ll \log x 
		\end{equation*}
		by Mertens' theorem.
	\end{proof}

	\section{Number of all subgroups in the case $k=2$} \label{Section_k_2}
	
	Consider the group $\Z_m\times \Z_n$, where $m,n\in \N$. Note that $\Z_m\times \Z_n$ is isomorphic to $\Z_{(m,n)}\times \Z_{[m,n]}$, and it has rank two, assuming that $(m,n)>1$.
	
	Goursat's lemma for groups can be stated as follows:
	\vskip2mm
	
	{\sl i) Let $G$ and $H$ be arbitrary groups. Then there is a bijection between 
		the set of all subgroups of $G \times H$ 
		and the set of all $5$-tuples $(A, B, C, D, \Psi)$, where $B\, \nr \, A \le G$,
		$D\, \nr \, C\le H$ and $\Psi: A/B \to C/D$ is an isomorphism (here
		$\le$ denotes subgroup and $\nr$ denotes normal subgroup). More
		precisely, the subgroup corresponding to $(A, B, C, D, \Psi)$ is
		\begin{equation*}
			K= \{ (g,h)\in A\times C: \Psi(gB)=hD\}.
		\end{equation*}
		
		ii) Assume that $G$ and $H$ are finite groups and assume that the subgroup
		$K$ of $G \times H$ corresponds to the $5$-tuple $(A_K, B_K, C_K, D_K, \Psi_K)$ under 
		the above bijection. Then one has $|A_K|\cdot |D_K| =|K|=|B_K|\cdot |C_K|$.
	}
	\vskip2mm
	
	For the history of Goursat's lemma, its proof and discussion see the references given in T\'oth \cite{Tot2014Tatra}. Also see the paper by Bauer et. al. \cite{BSZ2015}.
	
	For every $m,n\in \N$ let
	
	\begin{equation} \label{def_J_m_n}
		J_{m,n}:=\left\{(a,b,c,d,\ell)\in \N^5: a\mid m, b\mid a, c\mid n, d\mid
		c, \frac{a}{b}=\frac{c}{d}, \right.
	\end{equation}
	\begin{equation*} \left.
		\ell \le \frac{a}{b}, \, \gcd\left(\ell,\frac{a}{b} \right)=1\right\},
	\end{equation*}
	and for $(a,b,c,d,\ell)\in J_{m,n}$ define
	\begin{equation} \label{def_K}
		K_{a,b,c,d,\ell}:= \left\{\left(i\frac{m}{a}, i\ell \frac{n}{c}+j\frac{n}{d}\right): 0\le i\le a-1, 0\le
		j\le d-1\right\}.
	\end{equation} 
	
	Using Goursat's lemma, the author \cite[Th.\ 3.1]{Tot2014Tatra} gave the following representation of the 
	subgroups of $\Z_m\times \Z_n$. 
	\vskip2mm
	
	{\sl
		Let $m,n\in \N$.
		
		i) The map $(a,b,c,d,\ell)\mapsto K_{a,b,c,d,\ell}$ is a bijection
		between the set $J_{m,n}$ and the set of subgroups of $(\Z_m \times
		\Z_n,+)$.
		
		ii) The invariant factor decomposition\index{invariant factor decomposition} of the subgroup
		$K_{a,b,c,d,\ell}$ is
		\begin{equation} \label{H_isom}
			K_{a,b,c,d,\ell} \simeq \Z_{\gcd(b,d)} \times \Z_{\lcm(a,c)},
		\end{equation}
		where $\gcd(b,d)\mid \lcm(a,c)$.
		
		iii) The order of the subgroup $K_{a,b,c,d,\ell}$ is $ad$ and its
		exponent is $\lcm(a,c)$.
		
		iv) The subgroup $K_{a,b,c,d,\ell}$ is cyclic if and only if
		$\gcd(b,d)=1$.
	}
	\vskip2mm
	
	The next figure represents the subgroup $K_{6,2,18,6,1}$ of $\Z_{12}\times \Z_{18}$. It has order $36$ and is isomorphic to $\Z_2\times \Z_{18}$.
	
	\begin{table}
		\centerline{$
			\begin{array}{ccccccccccccc}
				17 & \cdot & \cdot & \cdot & \cdot & \bullet & \cdot & \cdot & \cdot & \cdot & \cdot & \bullet & \cdot \\
				16 & \cdot & \cdot & \bullet & \cdot & \cdot & \cdot & \cdot & \cdot & \bullet & \cdot & \cdot & \cdot \\
				15 & \bullet & \cdot & \cdot & \cdot & \cdot & \cdot & \bullet & \cdot & \cdot & \cdot & \cdot & \cdot \\
				14 & \cdot & \cdot & \cdot & \cdot & \bullet & \cdot & \cdot & \cdot & \cdot & \cdot &
				\bullet & \cdot \\
				13 & \cdot & \cdot & \bullet & \cdot & \cdot & \cdot & \cdot & \cdot & \bullet & \cdot & \cdot & \cdot \\
				12 & \bullet & \cdot & \cdot & \cdot & \cdot & \cdot & \bullet & \cdot & \cdot & \cdot & \cdot & \cdot \\
				11 & \cdot & \cdot & \cdot & \cdot & \bullet & \cdot & \cdot & \cdot & \cdot & \cdot & \bullet & \cdot \\
				10 & \cdot & \cdot & \bullet & \cdot & \cdot & \cdot & \cdot & \cdot & \bullet & \cdot & \cdot & \cdot \\
				9 & \bullet & \cdot & \cdot & \cdot & \cdot & \cdot & \bullet & \cdot & \cdot & \cdot & \cdot & \cdot \\
				8 & \cdot & \cdot & \cdot & \cdot & \bullet & \cdot & \cdot & \cdot & \cdot & \cdot & \bullet & \cdot \\
				7 & \cdot & \cdot & \bullet & \cdot & \cdot & \cdot & \cdot & \cdot & \bullet & \cdot & \cdot & \cdot \\
				6 & \bullet & \cdot & \cdot & \cdot & \cdot & \cdot & \bullet & \cdot & \cdot & \cdot & \cdot & \cdot \\
				5 & \cdot & \cdot & \cdot & \cdot & \bullet & \cdot & \cdot & \cdot & \cdot & \cdot & \bullet & \cdot \\
				4 & \cdot & \cdot & \bullet & \cdot & \cdot & \cdot & \cdot & \cdot & \bullet & \cdot & \cdot & \cdot \\
				3 & \bullet & \cdot & \cdot & \cdot & \cdot & \cdot & \bullet & \cdot & \cdot & \cdot & \cdot & \cdot \\
				2 & \cdot & \cdot & \cdot & \cdot & \bullet & \cdot & \cdot & \cdot & \cdot & \cdot &
				\bullet & \cdot \\
				1 & \cdot & \cdot & \bullet & \cdot & \cdot & \cdot & \cdot & \cdot & \bullet & \cdot & \cdot & \cdot \\
				0 & \bullet & \cdot & \cdot & \cdot & \cdot & \cdot & \bullet & \cdot & \cdot & \cdot & \cdot & \cdot \\
				& 0\text{ } & 1\text{ } & 2\text{ } & 3\text{ } & 4\text{ } & 5\text{ } & 6
				\text{ } & 7\text{ } & 8\text{ } & 9\text{ } & 10 & 11
			\end{array}
			$}
	\end{table}
	
	According to the above properties, the total number $s(m,n)$ of
	subgroups of $\Z_m \times \Z_n$ can be obtained by counting the
	elements of the set $J_{m,n}$. We have the following simple compact formulas, see the author 
	\cite[Th.\ 4.1]{Tot2014Tatra}. For every $m,n\in \N$, $s(m,n)$ is given by
	\begin{align} \label{total_number_subgroups}
		s(m,n) & = \sum_{i\mid m, j\mid n} \gcd(i,j) \\
		\label{total_number_subgroups_var}
		& = \sum_{d \mid \gcd(m,n)} \varphi(d) \tau\left(\frac{m}{d} \right)
		\tau \left(\frac{n}{d} \right).
	\end{align}
	
	Representation and properties of the subgroups of $\Z_m \times \Z_n$, including
	identities \eqref{total_number_subgroups} and \eqref{total_number_subgroups_var}
	were also obtained by Hampejs, Holighaus, T\'oth and Wiesmeyr \cite[Th.\ 4]{HHTW2014} by tooking 
	a different approach.
	
	It is a direct consequence of \eqref{total_number_subgroups} or \eqref{total_number_subgroups_var}
	that for every prime $p$ and $1\le a\le b$, 
	\begin{equation} \label{s_prime_pow}
		s(p^a,p^b)= \frac{(b-a+1)p^{a+2}-(b-a-1)p^{a+1}-(a+b+3)p+(a+b+1)}{(p-1)^2}. 
	\end{equation}
	
	Formula \eqref{s_prime_pow} was also deduced, applying Goursat's
	lemma for groups by C\u{a}lug\u{a}reanu \cite[Sect.\ 4]{Cal2004}
	and Petrillo \cite[Prop.\ 2]{Pet2011}, using the concept of
	the fundamental group lattice by T\u{a}rn\u{a}uceanu \cite[Prop.\
	2.9]{Tar2007}, \cite[Th.\ 3.3]{Tar2010}, and from a general recurrence  
	relation by Ramar\'e \cite[Eq.\ (10)]{Ram2017}.
	
	We also remark that \eqref{total_number_subgroups} is a special case of an identity representing the total number of subgroups of a class of groups formed as
	cyclic extensions of cyclic groups, deduced by Calhoun \cite{Cal1987} and having a laborious proof. 
	
	Let $s_{\delta}(m,n)$ denote the number of subgroups of order $\delta$
	of $\Z_m \times \Z_n$. For every $m,n,\delta \in \N$ such that $\delta \mid mn$,
	\begin{align*}
		s_{\delta}(m,n) & = \sum_{\substack{i\mid \gcd(m,\delta)\\ j\mid \gcd(n,\delta)\\ \delta \mid ij}} \varphi(ij/\delta))
		\\
		& = \sum_{d\mid \gcd(m,n,\delta)} \varphi(d) N(m/d,n/d,\delta/d),
	\end{align*}
	where $N(a,b,c)$ is the number of solutions $(x,y,z,t)\in \N^4$ of the system of equations
	$xy=a, zt=b, xz=c$, see T\'oth \cite[Th.\ 4.3]{Tot2014Tatra}, Hampejs, Holighaus, T\'oth and Wiesmeyr
	\cite[Th.\ 4]{HHTW2014}.
	
	For the number of subgroups of $\Z_m \times \Z_n$ with a given
	isomorphism type $\Z_A \times \Z_B$ we have the following
	formula, see the author \cite[Th.\ 4.5]{Tot2014Tatra}. 
	\vskip2mm
	
	{\sl Let $m,n\in \N$ and let $A,B\in \N$ such that $A\mid B$, $AB\mid mn$. Let $A\mid \gcd(m,n)$.
		Then the number $N_{A,B}(m,n)$ of subgroups of $\Z_m \times \Z_n$, which are isomorphic to $\Z_A \times \Z_B$ is given by
		\begin{equation*} 
			N_{A,B}(m,n)= \sum_{\substack{i\mid m, j\mid n \\ AB \mid ij\\
					\lcm(i,j)= B}} \varphi \left(\frac{ij}{AB}\right),
		\end{equation*}
		where $\varphi$ is Euler's function. If $A\nmid \gcd(m,n)$, then $N_{A,B}(m,n)=0$.
	}
	\vskip2mm
	
	The following result is concerning the quotient group
	$(\Z_m\times \Z_n)/K_{a,b,c,d,\ell}$. 
	
	\begin{proposition} \label{lemma_quotient} 
		Let $m,n\in \N$. For every subgroup $K_{a,b,c,d,\ell}$
		defied by \eqref{def_K}, we have the invariant factor decomposition
		\begin{equation*} 
			(\Z_m\times \Z_n)/K_{a,b,c,d,\ell} \simeq \Z_{\gcd
				\left(\frac{m}{a},\frac{n}{c}\right)} \times \Z_{\lcm
				\left(\frac{m}{b},\frac{n}{d}\right)},
		\end{equation*}
		where $\gcd \left(\frac{m}{a},\frac{n}{c}\right) \mid \lcm
		\left(\frac{m}{b},\frac{n}{d}\right)$.
	\end{proposition}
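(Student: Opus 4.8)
The plan is to reduce the statement to a Smith normal form computation over $\Z$. Write $\Z_m\times\Z_n=\Z^2/L$ with $L=m\Z\oplus n\Z$, let $\pi\colon\Z^2\to\Z_m\times\Z_n$ be the canonical projection, and set $\widetilde K:=\pi^{-1}(K_{a,b,c,d,\ell})$. Since $L\le\widetilde K\le\Z^2$, the third isomorphism theorem gives $(\Z_m\times\Z_n)/K_{a,b,c,d,\ell}\simeq\Z^2/\widetilde K$, and $\widetilde K$ has rank two because it contains $L$. So it suffices to determine the invariant factors of $\widetilde K$.

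First I would fix a convenient basis of $\widetilde K$. By \eqref{def_K}, the group $K_{a,b,c,d,\ell}$ is generated by $v_1:=(m/a,\ \ell n/c)$ (the term $i=1$, $j=0$) and $v_2:=(0,\ n/d)$ (the term $i=0$, $j=1$); these are genuine integer vectors since $a\mid m$ and $d\mid c\mid n$. Hence $\widetilde K$ is generated by $v_1,v_2$ together with $(m,0)$ and $(0,n)$, which span $L$. Now $(0,n)=d\,v_2$, and, using the relation $ad=bc$ of $J_{m,n}$ (so that $an/c=bn/d$), one gets $a\,v_1-\ell b\,v_2=(m,\ a\ell n/c-\ell b\,n/d)=(m,0)$. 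Thus $(m,0)$ and $(0,n)$ are superfluous and $\widetilde K=\Z v_1\oplus\Z v_2$.

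Next I would read off the Smith normal form of $M:=\left(\begin{smallmatrix} m/a & 0\\ \ell n/c & n/d\end{smallmatrix}\right)$, whose columns are $v_1,v_2$: its invariant factors are $d_1=\gcd(m/a,\ \ell n/c,\ n/d)$ and $d_2=|\det M|/d_1=(m/a)(n/d)/d_1$, with $d_1\mid d_2$ automatically. Since $d\mid c$ we may write $n/d=(c/d)(n/c)$, and as $c/d=a/b$ is coprime to $\ell$ (by the definition of $J_{m,n}$) this gives $\gcd(\ell n/c,\ n/d)=n/c$, hence $d_1=\gcd(m/a,\ n/c)$. For $d_2$, writing $m/b=(a/b)(m/a)$ and $n/d=(a/b)(n/c)$ yields $\gcd(m/b,\ n/d)=(a/b)\gcd(m/a,\ n/c)$, so that
\[
\lcm(m/b,\ n/d)=\frac{(m/b)(n/d)}{\gcd(m/b,\ n/d)}=\frac{(m/a)(n/d)}{\gcd(m/a,\ n/c)}=d_2 .
\]
Therefore $\Z^2/\widetilde K\simeq\Z_{\gcd(m/a,n/c)}\times\Z_{\lcm(m/b,n/d)}$, and the required divisibility $\gcd(m/a,n/c)\mid\lcm(m/b,n/d)$ is precisely $d_1\mid d_2$.

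The whole argument is bookkeeping; the one place needing care is the reduction of the generating set of $\widetilde K$, and — in the Smith normal form step — the essential use of $\gcd(\ell,a/b)=1$, without which $d_1$ would not collapse to $\gcd(m/a,n/c)$. As a cross-check one could instead argue by Pontryagin duality: $(\Z_m\times\Z_n)/K_{a,b,c,d,\ell}$ is isomorphic to its own character group, namely the annihilator $K_{a,b,c,d,\ell}^{\perp}$ inside $\widehat{\Z_m\times\Z_n}\simeq\Z_m\times\Z_n$; a short computation identifies $K_{a,b,c,d,\ell}^{\perp}$ as again of Goursat type, with parameters $(m/b,\ m/a,\ n/d,\ n/c,\ \ell')$ for a suitable $\ell'$, whereupon \eqref{H_isom} applied to it gives the same answer. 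I would nevertheless present the self-contained lattice computation above.
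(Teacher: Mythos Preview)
Your argument is correct and complete. The reduction of generators for $\widetilde K$ is right (the identity $a/b=c/d$ gives $a\,v_1-\ell b\,v_2=(m,0)$), the Smith normal form computation is valid for a $2\times 2$ integer matrix ($d_1$ is the gcd of the entries, $d_1d_2=|\det M|$), and the simplification $\gcd(\ell n/c,\,n/d)=n/c$ follows from $\gcd(\ell k,\,(c/d)k)=k\gcd(\ell,c/d)=k$ with $k=n/c$.

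Your route is genuinely different from the paper's. The paper works directly inside the quotient: it computes the order of $(0,1)+K$ as $n/d$ and, via a short congruence argument, the order of $(1,0)+K$ as $m/b$; from these it reads off the exponent $\lcm(m/b,n/d)$ of the quotient, and then recovers the other invariant factor from the known order $mn/(ad)$ of the quotient. Your approach instead lifts everything to the lattice $\widetilde K\le\Z^2$ and computes the invariant factors by Smith normal form. The lattice method is cleaner and entirely mechanical, avoiding the somewhat fiddly determination of the order of $(1,0)+K$; the paper's method is more elementary in that it uses no structure theory for $\Z$-modules beyond the rank-two decomposition $\Z_v\times\Z_w$. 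Your Pontryagin-duality remark is a nice sanity check and indeed gives a third proof via \eqref{H_isom}.
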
 
	
	\begin{proof} Let $K:= K_{a,b,c,d,\ell}$. The group $\Z_m\times \Z_n$ is
		generated by $(0,1)$ and $(1,0)$. Therefore, the quotient group
		$\Z_m\times \Z_n/K$ is generated by $(0,1)+K$ and $(1,0)+K$.
		
		First we show that the order of $(0,1)+K$ is $\frac{n}{d}$. Indeed, this
		follows from the following properties:
		
		$\bullet$ $t(0,1)\in K$ if and only if there is $(i,j)\in \{0,1,\ldots, a-1\}
		\times \{0,1,\ldots,d-1\}$ such that $(0,t)=(i\frac{m}{a}, i\ell
		\frac{n}{c} +j\frac{n}{d})$, where the last condition is equivalent, in turn, to $i=0$ and
		$t\equiv j\frac{n}{d}$ (mod $n$); 
		
		$\bullet$ the least such $t\in \N$ is $\frac{n}{d}$.
		
		Next we show that the order of $(1,0)+K$ is $\frac{m}{b}$. Indeed,
		observe that
		
		$\bullet$ $u(1,0)\in K$ if and only if there is $(i,j)\in \{0,1,\ldots, a-1\}
		\times \{0,1,\ldots,d-1\}$ such that $(u,0)=(i\frac{m}{a}, i\ell
		\frac{n}{c} +j\frac{n}{d})$;
		
		$\bullet$ therefore, $u\equiv i\frac{m}{a}$ (mod $m$) and $i\ell
		\frac{n}{c}+j\frac{n}{d} \equiv 0$ (mod $n$), that is
		$\frac{n}{c}(i\ell +j\frac{c}{d}) \equiv 0$ (mod $n$), $i\ell
		+j\frac{c}{d} \equiv 0$ (mod $c$), the latter is a linear congruence
		in $j$ and it turns out that for a fixed $i$ it has solution in $j$
		if $\gcd\left(\frac{c}{d},c\right)=\frac{c}{d} \mid i\ell$, that is
		$\frac{c}{d }\mid i$, since $\gcd\left(\ell,\frac{c}{d}\right)=1$
		by \eqref{def_J_m_n};
		
		$\bullet$ the least such $i\in \N$ is $i=\frac{c}{d}$;
		
		$\bullet$ finally, the least such $u\in \N$ is $u=\frac{c}{d}\cdot \frac{m}{a}=\frac{a}{b}\cdot
		\frac{m}{a}= \frac{m}{b}$, representing the order of $(1,0)+K$.
		
		We deduce that the exponent of $(\Z_m \times \Z_n)/K$ is
		$\lcm(\frac{m}{b}, \frac{n}{d})$.
		
		Now, $\Z_m \times \Z_n \simeq \Z_{\gcd(m,n)} \times \Z_{\lcm(m,n)}$
		is a group of rank $\le 2$. Therefore, the quotient group $(\Z_m
		\times \Z_n)/K$ has also rank $\le 2$. That is, $(\Z_m \times
		\Z_n)/K \simeq \Z_v \times \Z_w$ for unique $v$ and $w$, where
		$v\mid w$ and $vw=\frac{mn}{ad}$, since $|K|=ad$. Here the exponent of $\Z_v \times \Z_w$ is $w$, and we 
		obtain that the exponent of $(\Z_m \times \Z_n)/K$ is $\lcm(\frac{m}{b}, \frac{n}{d})=w$. Hence
		\begin{align*}
			v & =  \frac{mn}{adw}= \frac{mn}{ad\lcm (\frac{m}{b}, \frac{n}{d})}=
			\frac{mn}{ad \lcm (\frac{mn}{bn}, \frac{mn}{dm})} = \frac{mn}{ad
				\frac{mn}{\gcd(bn, dm)}} \\
			& =  \frac{\gcd(bn,dm)}{ad}= \gcd\left( \frac{n}{d}\cdot
			\frac{b}{a},\frac{m}{a}\right) = \gcd\left( \frac{n}{d}\cdot
			\frac{d}{c},\frac{m}{a}\right) = \gcd\left( \frac{m}{a},
			\frac{n}{c}\right).
		\end{align*}
		
		This completes the proof.
	\end{proof}  
	
	Based on formulas \eqref{total_number_subgroups} and \eqref{total_number_subgroups_var}, Nowak and T\'oth \cite{NowTot2013,NowTot2014} proved the following results. For every $z,w\in \C$ with $\Re z>1, \Re w>1$,
	\begin{equation*} 
		\sum_{m,n=1}^{\infty} \frac{s(m,n)}{m^z n^w}=
		\frac{\zeta^2(z)\zeta^2(w)\zeta(z+w-1)}{\zeta(z+w)},
	\end{equation*}
	and 
	\begin{equation} \label{ss}
		\sum_{m,n\le x} s(m,n) = x^2 \sum_{r=0}^3 A_r (\log x)^r + O\left(x^{\frac{1117}{701}+\varepsilon} \right),
	\end{equation} 
	for every fixed $\varepsilon>0$, where $A_3=2/\pi^2$, $A_2,A_1,A_0$ are explicit constants, and $1117/701\doteq 1.593437$. Nowak and T\'oth \cite{NowTot2013,NowTot2014} also proved a similar formula with the same error term for the sum $\sum_{\substack{n_1,n_2\le x, (n_1,n_2)>1}} s(n_1,n_2)$ concerning the number of all subgroups of the
	rank two groups $\Z_{n_1}\times \Z_{n_2}$. The error term of \eqref{ss} is the same as of \eqref{asympt_c_2}, and can be replaced by $O\left(x^{\frac{3-\theta}{2-\theta}+\varepsilon}\right)$, 
	where $\theta$ is the exponent in Dirichlet's divisor problem. T\'oth and Zhai \cite{TotZha2018} 
	proved, by using a two dimensional Perron formula and the complex integration method that
	the error term of \eqref{ss} can be refined into $O\left(x^{3/2} (\log x)^{13/2}\right)$.
	
	Sui and Liu \cite{SL2020} deduced an asymptotic formula for the hyperbolic summation $\sum_{mn\le x} s(m,n)$, and studied the order of magnitude of the corresponding error term.
	Kiuchi and Saad Eddin \cite{KS2020} deduced an asymptotic formula for the weighted sum $\sum_{mn\le x} s(m,n) \log \frac{x}{mn}$.
	
	Sui and Liu \cite{SL2022} proved by a method which is elementary in essence, but
	using some preliminary results of analytic nature, that
	\begin{equation*} 
		\sum_{\substack{m,n\in \N\\ m^2+n^2 \le x}} s(m,n) = x P(\log x) + O\left(x^{17/22+\varepsilon}\right),
	\end{equation*}
	where $P(t)$ is a polynomial in $t$ of degree three. 
	By using a two dimensional Perron formula and the complex integration method,
	the same authors \cite{SL2022k} proved that for every fixed $k\ge 2$,
	\begin{equation*} 
		\sum_{m,n\le x} s(m^k,n^k) = C_kx^{k+1} + O\left(x^{k+1/2+\varepsilon}\right),
	\end{equation*}
	where $C_k$ is an absolute constant. 
	
	It was mentioned in our arXiv preprint \cite{NowTot2013} that
	\begin{equation} \label{asympt_s_n_n}
		\sum_{n\le x} s(n,n)= \frac{5\pi^2}{24}x^2 + O\left( x(\log x)^{8/3}\right),
	\end{equation}
	the error term coming from the result of Walfisz for the Dedekind function 
	$\psi(n)=n\prod_{p\mid n} (1+1/p)$.
	
	T\u{a}rn\u{a}uceanu and T\'oth \cite[Prop. 3.2]{TarTot2017} proved that the sum of exponents of the subgroups of $\Z_m\times \Z_n$ is $\sigma(m)\sigma(n)$ for every $m,n\in \N$. Therefore, the arithmetic mean of exponents of the subgroups of $\Z_m\times \Z_n$ is
	$\sigma(m)\sigma(n)/s(m,n)$. Now consider the case $m=n$, and  let $AE(n)$ stand for the arithmetic mean of exponents of the
	subgroups of $\Z_n\times \Z_n$. We have
	\begin{equation*} \label{A}
		AE(n)= \frac{\sigma(n)^2}{s(n,n)},
	\end{equation*}
	and, see same authors \cite[Prop.\ 3.3]{TarTot2017},
	\begin{align*} 
		\sum_{n\le x} AE(n) = \frac{C}{2} x^2 + O\left(x \log^3  x \right),
	\end{align*}
	where
	\begin{align*}
		C:=  \prod_p \left(1-\frac1{p}\right) \sum_{\nu=0}^{\infty} \frac{(p^{\nu+1}-1)^2}{p^{2\nu}(p^{\nu+2}+p^{\nu+1}- (2\nu+3)p+2\nu+1)}.
	\end{align*}
	
	A generalization of the function $s(m,n)$ is $\sigma_t(m,n)$, defined as the sum of 
	$t$-th powers of the orders of subgroups of $\Z_m \times \Z_n$, that is,
	\begin{equation*}
		\sigma_t(m,n)=\sum_{H\le \Z_m \times \Z_n} |H|^t,
	\end{equation*}
	which is multiplicative. If $t=0$, then $\sigma_0(m,n)=s(m,n)$. Here we prove 
	the following related results. 
	
	\begin{theorem} \label{Th_sigma_t} Let $m,n\in \N$ and $t\in \C$. Then
		\begin{align} \label{form_t_1}
			\sigma_t(m,n) & = \sum_{i\mid m, j\mid n} \lcm(i,j)^t (\id_t*\varphi)(\gcd(i,j))
			\\  \label{form_t_2}
			& = \sum_{d \mid \gcd(m,n)} \varphi(d) d^t \sigma_t\left(\frac{m}{d} \right)
			\sigma_t \left(\frac{n}{d} \right).
		\end{align}
	\end{theorem}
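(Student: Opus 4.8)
The plan is to read everything off the Goursat-type parametrization of the subgroups of $\Z_m\times\Z_n$ recalled above. Since $(a,b,c,d,\ell)\mapsto K_{a,b,c,d,\ell}$ is a bijection from $J_{m,n}$ onto the set of subgroups and $|K_{a,b,c,d,\ell}|=ad$, we have at once
$$\sigma_t(m,n)=\sum_{(a,b,c,d,\ell)\in J_{m,n}} (ad)^t .$$
First I would regroup this sum according to the ``outer'' divisors $a\mid m$ and $c\mid n$. For such a fixed pair, a quintuple $(a,b,c,d,\ell)$ belongs to $J_{m,n}$ exactly when $b=a/e$ and $d=c/e$ for the common value $e:=a/b=c/d$; the conditions $b\mid a$, $d\mid c$ then force $e\mid\gcd(a,c)$, and $\ell$ runs over the $\varphi(e)$ residues in $\{1,\ldots,e\}$ that are coprime to $e$. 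Using $ad=ac/e=\lcm(a,c)\cdot\gcd(a,c)/e$ (a one-line $\gcd$–$\lcm$ identity, equivalently the stated isomorphism $K_{a,b,c,d,\ell}\simeq\Z_{\gcd(b,d)}\times\Z_{\lcm(a,c)}$ together with $\gcd(b,d)=\gcd(a,c)/e$), the inner sum becomes $\lcm(a,c)^t\sum_{e\mid\gcd(a,c)}\varphi(e)\,(\gcd(a,c)/e)^t$. Recognizing $\sum_{e\mid g}\varphi(e)(g/e)^t=(\id_t*\varphi)(g)$ yields \eqref{form_t_1}.

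Next I would pass from \eqref{form_t_1} to \eqref{form_t_2} by a standard Dirichlet-convolution rearrangement. Write $\lcm(i,j)=ij/\gcd(i,j)$, and observe that
$$\frac{(\id_t*\varphi)(g)}{g^t}=\sum_{e\mid g}\frac{\varphi(g/e)}{(g/e)^t}=\sum_{d\mid g}\frac{\varphi(d)}{d^t},$$
so \eqref{form_t_1} takes the shape $\sigma_t(m,n)=\sum_{i\mid m,\,j\mid n} i^t j^t\sum_{d\mid\gcd(i,j)}\varphi(d)/d^t$. Interchanging the order of summation to put $d$ outermost — so that $d$ ranges over divisors of $\gcd(m,n)$ — and using $\sum_{d\mid i\mid m} i^t=d^t\sigma_t(m/d)$ and likewise for $j$, the factor $\varphi(d)\,d^{-t}\cdot d^t\cdot d^t=\varphi(d)\,d^t$ appears, giving exactly \eqref{form_t_2}.

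The only genuinely delicate point is the bookkeeping in the regrouping step: one must check carefully that, for fixed $(a,c)$, the admissible pairs $(b,d)$ correspond bijectively to divisors $e$ of $\gcd(a,c)$ (this is where $b\mid a$, $d\mid c$ and $a/b=c/d$ are jointly used), that the $\ell$-count is precisely $\varphi(e)$, and that the order identity $ad=\lcm(a,c)\cdot\gcd(a,c)/e$ is correctly stated; everything after that is formal manipulation of finite divisor sums. As an alternative to the convolutional derivation of \eqref{form_t_2}, one could instead note that $\sigma_t(m,n)$ is multiplicative in $(m,n)$ and verify both identities by reduction to prime powers (in the spirit of \eqref{s_prime_pow}), but the route above is shorter and keeps the combinatorial input visible.
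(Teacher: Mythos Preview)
Your proof is correct and follows essentially the same route as the paper: both start from the parametrization $\sigma_t(m,n)=\sum_{(a,b,c,d,\ell)\in J_{m,n}}(ad)^t$, reduce via $e=a/b=c/d\mid\gcd(a,c)$ to a sum involving $\varphi(e)$, and then recognize the convolution $(\id_t*\varphi)$. The only cosmetic difference is that the paper changes variables to $(b,x,e,d,z)$ with $bxe=m$, $dze=n$ and branches off to \eqref{form_t_2} directly from that intermediate expression, whereas you fix $(a,c)$ first and then obtain \eqref{form_t_2} from \eqref{form_t_1} by interchanging the $d$-sum; both orderings amount to the same rearrangement.
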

	
	\begin{proof} We use the representation of the subgroups of $\Z_m \times \Z_n$, given by \eqref{def_K}. 
		We obtain
		\begin{equation} \label{S_J}
			\sigma_t(m,n) = \sum_{(a,b,c,d,\ell)\in J_{m.n}} |K_{a,b,c,d,\ell}|^t 
			= \sum_{\substack{a\mid m\\ b\mid a}} \sum_{\substack{c\mid
					n\\ d\mid c}} \sum_{a/b=c/d=e} \sum_{\substack{\ell \le e\\ (\ell,e)=1}} (ad)^t
		\end{equation}
		
		Let $m=ax$, $a=by$, $n=cz$, $c=dt$. Rearranging the terms of \eqref{S_J},
		\begin{equation}  \label{proof_line}
			\sigma_t(m,n)= \sum_{bxe=m} \sum_{dze=n} \varphi(e) (bde)^t =
			\sum_{\substack{ix=m\\ jz=n}} i^t \sum_{\substack{be=i\\ de=j}} (j/e)^t \varphi(e)
		\end{equation}
		\begin{equation*}
			= \sum_{\substack{i\mid m\\ j\mid n}} (ij)^t \sum_{e\mid \gcd(i,j)} \frac{\varphi(e)}{e^t} 
			= \sum_{\substack{i\mid m\\ j\mid n}} \frac{(ij)^t}{\gcd(i,j)^t} (\id_t * \varphi)(\gcd(i,j)),
		\end{equation*}
		using that for every $n\in \N$,
		\begin{equation*}
			\sum_{e\mid n} \frac{\varphi(e)}{e^t} =\frac1{n^t} \sum_{e\mid n} e^t\ \varphi(n/e) = \frac1{n^t}
			(\id_t* \varphi)(n),
		\end{equation*}
		and finishing the proof of \eqref{form_t_1}. Now write \eqref{proof_line} as follows:
		\begin{equation*}
			\sigma_t(m,n)= \sum_{\substack{ek=m\\ e\ell=n}} \varphi(e)e^t 
			\sum_{\substack{bx=k\\ dz=\ell}} (bd)^t = \sum_{\substack{ek=m\\
					e\ell=n}} \varphi(e) e^t \sum_{b\mid k} b^t \sum_{d\mid \ell} d^t
		\end{equation*}
		\begin{equation*}
			= \sum_{e\mid \gcd(m,n)} \varphi(e)e^t \sigma_t \left(\frac{m}{e}\right) \sigma_t \left(\frac{n}{e}\right),
		\end{equation*} 
		giving \eqref{form_t_2}.    
	\end{proof}
	
	Note that in the case $t=0$, \eqref{form_t_1} and \eqref{form_t_2} recover identities
	\eqref{total_number_subgroups} and \eqref{total_number_subgroups_var}, respectively.
	
	\begin{theorem} \label{Th_sigma_t_Dirichlet} We have the Dirichlet series representation
		\begin{equation} \label{Dir_sigma_t}
			\sum_{m,n=1}^{\infty} \frac{\sigma_t(m,n)}{m^z n^w}=
			\frac{\zeta(z)\zeta(z-t)\zeta(w)\zeta(w-t)\zeta(z+w-t-1)}{\zeta(z+w-t)}.
		\end{equation}
		and for $t=1$,
		\begin{equation} \label{sigma_1}
			\sum_{m,n\le x} \sigma_1(m,n) = \frac{\pi^6}{864\zeta(3)}x^4+ O\left(x^3 (\log x)^{5/3} \right).
		\end{equation} 
	\end{theorem}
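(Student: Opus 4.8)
The plan is to derive both assertions from the convolution identity \eqref{form_t_2} of Theorem~\ref{Th_sigma_t}, namely $\sigma_t(m,n)=\sum_{d\mid \gcd(m,n)}\varphi(d)\,d^t\,\sigma_t(m/d)\,\sigma_t(n/d)$. For the Dirichlet series \eqref{Dir_sigma_t} I would first fix $\RE z,\RE w$ large enough that every series occurring below is absolutely convergent (it suffices that $\RE z,\RE w>1+|\RE t|$ and $\RE(z+w-t)>2$), which legitimizes all rearrangements; the identity then persists by meromorphic continuation. Interchanging the order of summation so as to sum first over $d$ and then over the multiples $m=dm'$, $n=dn'$ yields
\[
\sum_{m,n=1}^{\infty}\frac{\sigma_t(m,n)}{m^z n^w}
=\left(\sum_{d=1}^{\infty}\frac{\varphi(d)\,d^t}{d^{z+w}}\right)
\left(\sum_{m'=1}^{\infty}\frac{\sigma_t(m')}{m'^z}\right)
\left(\sum_{n'=1}^{\infty}\frac{\sigma_t(n')}{n'^w}\right).
\]
Since $\sigma_t=\id_t*\1$, the two inner factors equal $\zeta(z)\zeta(z-t)$ and $\zeta(w)\zeta(w-t)$ respectively, and since $\varphi=\mu*\id$ the outer factor equals $\sum_{d\ge 1}\varphi(d)/d^{z+w-t}=\zeta(z+w-t-1)/\zeta(z+w-t)$. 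Multiplying these gives exactly \eqref{Dir_sigma_t}.

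For \eqref{sigma_1} I would set $t=1$ in \eqref{form_t_2}, so that $\sigma_1(m,n)=\sum_{d\mid \gcd(m,n)}\varphi(d)\,d\,\sigma(m/d)\,\sigma(n/d)$, and sum over $m,n\le x$. Writing $S(y):=\sum_{m\le y}\sigma(m)$, this gives $\sum_{m,n\le x}\sigma_1(m,n)=\sum_{d\le x}\varphi(d)\,d\,S(x/d)^2$. Next I insert Walfisz's estimate \eqref{error_sigma}, $S(y)=\tfrac{\pi^2}{12}y^2+O(y(\log y)^{2/3})$ for $y\ge 2$, square it to get $S(y)^2=\tfrac{\pi^4}{144}y^4+O(y^3(\log y)^{2/3})$ (the dyadic range $x/2<d\le x$, where $S(x/d)=O(1)$, contributes only $O(x^3)$), and substitute. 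The main term becomes $\tfrac{\pi^4}{144}x^4\sum_{d\le x}\varphi(d)/d^3$; extending the series to infinity costs $O(x^3)$ because $\sum_{d>x}\varphi(d)/d^3\ll 1/x$, and using $\sum_{d\ge 1}\varphi(d)/d^3=\zeta(2)/\zeta(3)$ one obtains $\tfrac{\pi^4}{144}\cdot\tfrac{\zeta(2)}{\zeta(3)}x^4=\tfrac{\pi^6}{864\,\zeta(3)}x^4$. The error term is $O\!\big(x^3(\log x)^{2/3}\sum_{d\le x}\varphi(d)/d^2\big)$, and since $\sum_{d\le x}\varphi(d)/d^2\ll\log x$ (elementary, or via Mertens' theorem), this is $O(x^3(\log x)^{5/3})$, giving \eqref{sigma_1}.

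Both parts are essentially routine. The only places calling for care are: (i) pinning down a half-plane in which the formal manipulations behind \eqref{Dir_sigma_t} are valid; and (ii) the error bookkeeping in \eqref{sigma_1} — one must check that the cross term arising from squaring $S(y)$ contributes the factor $(\log x)^{2/3}$ (the square of the remainder is smaller), and that multiplying by $\sum_{d\le x}\varphi(d)/d^2\asymp\log x$ produces precisely $(\log x)^{5/3}$, alongside the trivial handling of the range $d>x/2$ and of the tail of the series in the main term. Step (ii) is the one spot where a careless estimate (e.g. using the weaker bound $\sum_{m\le y}\sigma(m)=\tfrac{\pi^2}{12}y^2+O(y\log y)$) would yield a worse exponent of $\log x$, so that is where I would be most attentive.
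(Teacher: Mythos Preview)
Your proof is correct and follows essentially the same approach as the paper: both derive \eqref{Dir_sigma_t} directly from the convolution identity \eqref{form_t_2}, and both obtain \eqref{sigma_1} by writing $\sum_{m,n\le x}\sigma_1(m,n)=\sum_{d\le x}\varphi(d)\,d\,\big(\sum_{k\le x/d}\sigma(k)\big)^2$ and inserting Walfisz's estimate \eqref{error_sigma}. Your write-up is simply more explicit about the ``standard arguments'' the paper invokes for the error bookkeeping.
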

	
	\begin{proof} Here \eqref{Dir_sigma_t} is a direct consequence of identity \eqref{form_t_2}.
		If $t=1$, then using \eqref{form_t_2} we have
		\begin{equation*}
			\sum_{m,n\le x} \sigma_1(m,n) = \sum_{\substack{ek\le x, e\ell\le x}} \varphi(e)e \sigma(k)\sigma(\ell)
			= \sum_{e\le x} \varphi(e)e \left(\sum_{k\le x/e} \sigma(k)\right)^2,
		\end{equation*}
		and \eqref{sigma_1} follows applying formula \eqref{error_sigma}, with Walfisz' error term, and using standard arguments.    
	\end{proof}
	
	\section{Number of all subgroups in the case $k=3$}
	
	Now consider the group $\Gamma :=\Z_m \times \Z_n \times \Z_r$, where $m,n,r\in \N$. 
	By using only simple group-theoretic and number-theoretic arguments,
	Hampejs and T\'oth \cite{HamTot2013} proved the following
	result concerning the representation of the subgroups
	of $\Gamma$. 
	\vskip2mm
	
	{\sl 
		(i) Choose $a,b,c\in \N$ such that $a\mid m, b\mid n, c\mid r$.
		
		(ii) Compute $A:= \gcd(a,n/b)$, $B:=\gcd(b,r/c)$, $C:=\gcd(a,r/c)$.
		
		(iii) Compute
		\begin{equation*}
			X:= \frac{ABC}{\gcd(a(r/c),ABC)}.
		\end{equation*}
		
		(iv) Let $s:=at/A$, where $0\le t\le A-1$.
		
		(v) Let
		\begin{equation*}
			v:=\frac{bX}{B\gcd(t,X)}w, \text{ where } 0\le w\le B\gcd(t,X)/X-1.
		\end{equation*}
		
		(vi) Find a solution $u_0$ of the linear congruence
		\begin{equation*}
			(r/c)u \equiv rvs/(bc) \quad  \text{(mod $a$)}.
		\end{equation*}
		
		(vii) Let $u:=u_0+az/C$, where $0\le z\le C-1$.
		
		(viii) Consider
		\begin{equation*}
			U_{a,b,c,t,w,z}:= \langle (a,0,0),(s,b,0),(u,v,c) \rangle
		\end{equation*}
		\begin{equation*}
			= \{(ia+js+ku, jb+kv,kc): 0\le i\le n/a-1, 0\le j\le n/b-1, 0\le
			k\le n/c-1 \}.
		\end{equation*}
		
		Then $U_{a,b,c,t,w,z}$ is a subgroup of order $mnr/(abc)$ of 
		$\Gamma$. Moreover, there is a bijection between the set of
		sixtuples $(a,b,c,t,w,z)$ satisfying conditions (i)-(viii) and
		the set of subgroups of $\Gamma$.
	}
	\vskip2mm
	
	Let $P(n)=\sum_{k=1}^n \gcd(k,n)=\sum_{d\mid n} d\varphi(n/d)$ be the
	gcd-sum function. Note that the function $P$ is multiplicative and
	$P(p^\nu)= (\nu+1)p^\nu - \nu p^{\nu -1}$ for every prime power 
	$p^\nu$ ($\nu \in \N$). 
	
	For every $m,n,r\in \N$ the total number of subgroups
	of the group $\Z_m \times \Z_n \times \Z_r$ is given by
	\begin{equation} \label{form_3}
		s(m,n,r)= \sum_{a\mid m, b\mid n, c\mid r} \frac{ABC}{X^2} P(X),
	\end{equation}
	where $A,B,C$ are defined above. The number of subgroups of order $\delta$ ($\delta \mid mnr$) is
	given by \eqref{form_3} with the additional condition that the
	summation is subject to $abc= mnr/\delta$.
	If one of $m,n,r$ is $1$, then formula \eqref{form_3} 
	reduces to \eqref{total_number_subgroups}.
	
	It follows that for every prime $p$ and every $\lambda_1,\lambda_2,\lambda_3 \in \N$,
	$s(p^{\lambda_1},p^{\lambda_2},p^{\lambda_3})$ is a polynomial in $p$ with
	integer coefficients. Using different arguments, namely the formulas in terms of the gaussian coefficients
	mentioned in the Introduction, it was shown by T\u{a}rn\u{a}uceanu and T\'oth \cite[Cor.\ 2.2]{TarTot2017}
	that the corresponding explicit formula is, with $\lambda_1 \ge \lambda_2 \ge \lambda_3\ge 1$, 
	\begin{equation*}
		s(p^{\lambda_1},p^{\lambda_2},p^{\lambda_3}) = \frac{F(p)}{(p^2-1)^2(p-1)},
	\end{equation*}
	where
	\begin{align*}
		F(p) = & (\lambda_3+1)(\lambda_1-\lambda_2+1)p^{\lambda_2+\lambda_3+5} +
		2(\lambda_3+1)p^{\lambda_2+\lambda_3+4}  \\
		&  - 2(\lambda_3+1)(\lambda_1-\lambda_2)p^{\lambda_2+\lambda_3+3} -
		2(\lambda_3+1)p^{\lambda_2+\lambda_3+2} \\
		&+ (\lambda_3+1)(\lambda_1-\lambda_2-1)p^{\lambda_2+\lambda_3+1} -
		(\lambda_1+\lambda_2-\lambda_3+3)p^{2\lambda_3+4} \\
		& -2 p^{2\lambda_3+3} + (\lambda_1 + \lambda_2 - \lambda_3-1) p^{2\lambda_3+2}\\
		& + (\lambda_1 +\lambda_2 + \lambda_3+5) p^2 + 2p -(\lambda_1 + \lambda_2 + \lambda_3 +1).
	\end{align*}
	
	This formula was also obtained by Oh \cite[Cor.\ 2.2]{Oh2013}, using different arguments.
	
	The final result of this paragraph is an asymptotic formula for the number $s(n,n,n)$ of subgroups of $\Z_n^3$. Define the multiplicative function $h$ by
	\begin{equation*} 
		s(n,n,n)= \sum_{d\mid n} d^2\tau(d) h(n/d) \quad (n\in \N)
	\end{equation*}
	and let $H(z)= \sum_{n=1}^{\infty} h(n)n^{-z}$ be the Dirichlet
	series of $h$.
	
	We have, see Hampejs and T\'oth \cite[Th.\ 2.3]{HamTot2013} that
	for every $\varepsilon >0$,
	\begin{equation} \label{asympt_s_n_n_n}
		\sum_{n\le x} s(n,n,n) = \frac{x^3}{3} \left(H(3)( \log x+ 2\gamma-1)+
		H'(3) \right) + O\left(x^{2+\theta+ \varepsilon}\right),
	\end{equation}
	where $H'(z)$ is the derivative of $H(z)$, $\gamma$ is Euler's constant and $\theta$ is the exponent 
	in Dirichlet's divisor problem.
	
	\section{Number of all subgroups in the case $k=4$} \label{Section_k_4}
	
	Now consider the group $\Z_m\times \Z_n \times \Z_r \times \Z_s$, and let
	$N(m,n,r,s)$ and $N(m,n,r,s;k)$ denote the total number of its subgroups and 
	the number of its subgroups of order $k$, respectively.
	
	We state and prove the following results, already given in the arXiv paper by the author \cite{Tot2016}.
	Namely, we deduce direct formulas for $N(m,n,r,s)$ and $N(m,n,r,s;k)$, where $m,n,r,s,k\in \N$, $k\mid mnrs$, see Theorems \ref{Theorem_subgroups} and 
	\ref{Theorem_subgroups_order_k}. As direct consequences,
	in Corollaries \ref{Cor_subgroups} and \ref{Cor_subgroups_order_k} we derive formulas for
	the number of subgroups of $p$-groups of rank four. We do not obtain the explicit form of the corresponding polynomials, but those can be computed in 
	special cases. We present some tables for such polynomials, computed using Corollaries \ref{Cor_subgroups} and \ref{Cor_subgroups_order_k}. For the proofs 
	we use the representation of the subgroups of $\Z_m \times \Z_n$, Goursat's lemma (see Section \ref{Section_k_2}) and Proposition \ref{lemma_quotient}.
	A table of values for $N(n):=N(n,n,n,n)$ is also included.
	
	\begin{theorem} \label{Theorem_subgroups} For every $m,n,r,s\in \N$ we have
		\begin{align} \label{sum_4}
			N(m,n,r,s)= \sum \varphi(x_3) \varphi(y_3) \varphi(z_3) \varphi(t_3) F(u,v),
		\end{align}
		where $\varphi$ is Euler's function, $F(u,v)$ represents the number of automorphisms of the group $\Z_u \times \Z_v$, and the sum is over all 
		\begin{equation*}
			(x_1,x_2,x_3,x_4,x_5,y_1,y_2,y_3,y_4,y_5,z_1,z_2,z_3,z_4,z_5,t_1,t_2,t_3,t_4,t_5)\in
			\N^{20}
		\end{equation*} 
		satisfying the following $10$ conditions:
		
		{\rm (1)}\ $x_1x_2x_3=m$,\quad
		
		{\rm (2)}\ $x_3x_4x_5=n$,\quad
		
		{\rm (3)}\ $y_1y_2y_3=\gcd(x_1,x_4)$, \quad
		
		{\rm (4)}\ $y_3y_4y_5=x_3\lcm(x_1,x_4)$,
		
		{\rm (5)}\ $z_1z_2z_3=r$, \quad
		
		{\rm (6)}\ $z_3z_4z_5=s$, \quad
		
		{\rm (7)}\ $t_1t_2t_3=\gcd(z_1,z_4)$, \quad
		
		{\rm (8)}\ $t_3t_4t_5=z_3\lcm(z_1,z_4)$,
		
		{\rm (9)}\ $\gcd(y_2,y_5)=\gcd(t_2,t_5)=:u$, \quad
		
		{\rm (10)}\ $y_3\lcm(y_2,y_5)= t_3\lcm(t_2,t_5)=:v$.
	\end{theorem}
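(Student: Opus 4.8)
The plan is to group the four cyclic factors into two pairs, writing $\Z_m\times\Z_n\times\Z_r\times\Z_s = G\times H$ with $G:=\Z_m\times\Z_n$ and $H:=\Z_r\times\Z_s$, and apply Goursat's lemma. Since $G$ and $H$ are abelian, every subgroup is normal, so Goursat's lemma yields a bijection between the subgroups of $G\times H$ and the $5$-tuples $(A,B,C,D,\Psi)$ with $B\le A\le G$, $D\le C\le H$, and $\Psi:A/B\xrightarrow{\ \sim\ }C/D$ an isomorphism. Hence
\begin{equation*}
N(m,n,r,s)=\sum_{B\le A\le G}\ \sum_{D\le C\le H}\#\operatorname{Iso}(A/B,\,C/D),
\end{equation*}
where $\#\operatorname{Iso}(A/B,C/D)$ equals $\#\operatorname{Aut}(A/B)$ if $A/B\cong C/D$ and $0$ otherwise. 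Both $A/B$ and $C/D$ have rank $\le 2$, being quotients of subgroups of rank-$\le 2$ groups, so each is determined up to isomorphism by its pair of invariant factors.

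The heart of the argument is to parametrise the pairs $(A,B)$ with $B\le A\le G$ by applying the representation of the subgroups of $\Z_M\times\Z_N$ from Section~\ref{Section_k_2} \emph{twice}, in nested fashion. First, a subgroup $A\le\Z_m\times\Z_n$ equals $K_{a,b,c,d,\ell}$ for a unique $(a,b,c,d,\ell)\in J_{m,n}$; writing $e:=a/b=c/d$ and reparametrising by $x_3:=e$, $x_1:=b$, $x_2:=m/a$, $x_4:=d$, $x_5:=n/c$ turns the defining conditions into $x_1x_2x_3=m$, $x_3x_4x_5=n$ (conditions (1)--(2)), with exactly $\varphi(x_3)$ admissible values of $\ell$ for each such $(x_1,\dots,x_5)$. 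By \eqref{H_isom}, $A\simeq\Z_{\gcd(b,d)}\times\Z_{\lcm(a,c)}=\Z_{\gcd(x_1,x_4)}\times\Z_{x_3\lcm(x_1,x_4)}$. Since the isomorphism type of a quotient is unchanged if $A$ is replaced by an isomorphic group, the subgroups $B\le A$ may be counted by counting the subgroups of $\Z_{M'}\times\Z_{N'}$ with $M':=\gcd(x_1,x_4)$ and $N':=x_3\lcm(x_1,x_4)$; applying the same representation theorem with the same reparametrisation (parameters $y_1,\dots,y_5$) gives $y_1y_2y_3=M'$, $y_3y_4y_5=N'$ (conditions (3)--(4)), again with $\varphi(y_3)$ choices of the internal parameter. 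Finally, Proposition~\ref{lemma_quotient} applied to $\Z_{M'}\times\Z_{N'}$ and the subgroup corresponding to $(y_1,\dots,y_5)$ gives
\begin{equation*}
A/B\ \simeq\ \Z_{\gcd(M'/a',\,N'/c')}\times\Z_{\lcm(M'/b',\,N'/d')}\ =\ \Z_{\gcd(y_2,y_5)}\times\Z_{y_3\lcm(y_2,y_5)},
\end{equation*}
using $M'/a'=y_2$, $N'/c'=y_5$, $M'/b'=y_2y_3$, $N'/d'=y_3y_5$; as $\gcd(y_2,y_5)\mid y_3\lcm(y_2,y_5)$, this is the invariant factor decomposition.

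The pairs $(C,D)$ with $D\le C\le\Z_r\times\Z_s$ are parametrised identically by $(z_1,\dots,z_5,t_1,\dots,t_5)$ subject to conditions (5)--(8), with $\varphi(z_3)\varphi(t_3)$ internal choices, and $C/D\simeq\Z_{\gcd(t_2,t_5)}\times\Z_{t_3\lcm(t_2,t_5)}$ in invariant factor form. By uniqueness of the invariant factor decomposition, $A/B\cong C/D$ holds precisely when $\gcd(y_2,y_5)=\gcd(t_2,t_5)=:u$ and $y_3\lcm(y_2,y_5)=t_3\lcm(t_2,t_5)=:v$, which are conditions (9)--(10); when they hold, $\#\operatorname{Iso}(A/B,C/D)=\#\operatorname{Aut}(\Z_u\times\Z_v)=F(u,v)$. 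Since a given admissible $20$-tuple accounts for exactly $\varphi(x_3)\varphi(y_3)\varphi(z_3)\varphi(t_3)$ quadruples $(A,B,C,D)$ — all of which have $A/B$ and $C/D$ of the isomorphism types recorded above — and each such quadruple carries $F(u,v)$ isomorphisms $\Psi$, substituting into the displayed sum yields \eqref{sum_4}. The main point requiring care is the bookkeeping of the nested parametrisation: in particular, passing from the concrete subgroup $A\le\Z_m\times\Z_n$ to the abstract group $\Z_{M'}\times\Z_{N'}$ is harmless because both subgroup counts and quotient isomorphism types are preserved under isomorphism, and the representation theorem of Section~\ref{Section_k_2} together with Proposition~\ref{lemma_quotient} hold for $\Z_M\times\Z_N$ with \emph{arbitrary} $M,N$, so no divisibility condition on the intermediate moduli is needed.
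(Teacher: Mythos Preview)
Your proof is correct and follows essentially the same route as the paper: apply Goursat's lemma to $(\Z_m\times\Z_n)\times(\Z_r\times\Z_s)$, parametrise pairs $B\le A\le\Z_m\times\Z_n$ by a nested use of the $K_{a,b,c,d,\ell}$ representation (passing to the abstract $\Z_{M'}\times\Z_{N'}$ via \eqref{H_isom}), compute the invariant factors of $A/B$ via Proposition~\ref{lemma_quotient}, and match with the corresponding data on the $\Z_r\times\Z_s$ side. The only cosmetic difference is that you introduce the $x,y,z,t$ variables from the outset, whereas the paper first works with the natural $(a,b,c,d,\ell,\ldots)$ parameters and renames them at the end.
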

	
	\begin{proof}
		We apply Goursat's lemma for the groups $G=\Z_m \times \Z_n$ and
		$H=\Z_r \times \Z_s$. Let
		\begin{equation*}
			A=K_{a,b,c,d,\ell} \le \Z_m \times \Z_n,
		\end{equation*}
		where $a\mid m, b\mid a, c\mid n, d\mid c,
		\frac{a}{b}=\frac{c}{d}=e, 1\le \ell \le e, \gcd \left(\ell,e
		\right)=1$. Here, by \eqref{H_isom},
		\begin{equation*}
			A \simeq \Z_{m_1} \times \Z_{n_1},
		\end{equation*}
		where $m_1=\gcd(b,d)$, $n_1=\lcm(a,c)$. Also, let
		\begin{equation*}
			B \simeq K_{a_1,b_1,c_1,d_1,\ell_1} \le A,
		\end{equation*} 
		where $a_1\mid m_1, b_1\mid a_1, c_1\mid n_1, d_1\mid c_1,
		\frac{a_1}{b_1}=\frac{c_1}{d_1}=e_1, 1 \le \ell_1 \le e_1, \gcd
		\left(\ell_1,e_1 \right)=1$. We have, by using Proposition
		\ref{lemma_quotient},
		\begin{equation} \label{A/B}
			A/B \simeq \Z_u \times \Z_v,
		\end{equation}
		where $u=\gcd(m_1/a_1,n_1/c_1)$, $v=\lcm(m_1/b_1,n_1/d_1)$.
		
		In a similar way, let
		\begin{equation*}
			C=K_{\alpha,\beta,\gamma,\delta,\lambda} \le \Z_r \times \Z_s,
		\end{equation*}
		where $\alpha \mid r, \beta \mid \alpha, \gamma \mid s, \delta \mid
		\gamma, \frac{\alpha}{\beta}=\frac{\gamma}{\delta}=\epsilon, 1\le
		\lambda \le \epsilon, \gcd \left(\lambda,\epsilon \right)=1$. Here,
		\begin{equation*}
			C \simeq \Z_{r_1} \times \Z_{s_1},
		\end{equation*}
		where $r_1=\gcd(\beta,\delta)$, $s_1=\lcm(\alpha,\gamma)$.
		Furthermore, let
		\begin{equation*}
			D \simeq K_{\alpha_1,\beta_1,\gamma_1,\delta_1,\lambda_1} \le C,
		\end{equation*}
		with $\alpha_1\mid r_1, \beta_1\mid \alpha_1, \gamma_1\mid s_1,
		\delta_1\mid \gamma_1,
		\frac{\alpha_1}{\beta_1}=\frac{\gamma_1}{\delta_1}=\epsilon_1, 1 \le
		\lambda_1 \le \epsilon_1, \gcd \left(\lambda_1,\epsilon_1
		\right)=1$. Here,
		\begin{equation} \label{C/D}
			C/D \simeq \Z_{\eta} \times \Z_{\theta},
		\end{equation}
		where $\eta= \gcd(r_1/\alpha_1,s_1/\gamma_1)$,
		$\theta=\lcm(r_1/\beta_1,s_1/\delta_1)$.
		
		According to \eqref{A/B} and \eqref{C/D}, the quotient groups $A/B$ and $C/D$ are isomorphic.
		if and only if $u=\eta$ and $v=\theta$. In this case there are $F(u,v)$ isomorphisms $A/B \simeq C/D$.
		
		It follows that
		\begin{equation*}
			N(m,n,r,s)=\sum F(u,v),
		\end{equation*}
		where the sum is over all 
		\begin{equation*}
			(a,b,c,d,\ell,a_1,b_1,c_1,d_1,\ell_1, \alpha,\beta,\gamma,\delta,\lambda,\alpha_1,\beta_1,\gamma_1,\delta_1,\lambda_1)\in
			\N^{20}
		\end{equation*}
		satisfying the above properties.
		
		More exactly, let $m=ax, a=by$, $n=cz, c=dt$, where $y=t=e$. Hence $m=bxe$,
		$n=dze$. Also, $m_1=b_1x_1e_1$, $n_1=d_1z_1e_1$, where
		$m_1=\gcd(b,d)$, $n_1=\lcm(a,c)= e\lcm(b,d)$. Similarly, let $r=\alpha X, \alpha=\beta Y$, $s=\gamma Z,
		\gamma=\delta T$, where $Y=T=\epsilon$. Hence $r=\beta X\epsilon$, $s=\delta
		Z\epsilon$. Also, $r_1=\beta_1X_1\epsilon_1$,
		$s_1=\delta_1Z_1\epsilon_1$, where $r_1=\gcd(\beta,\delta)$,
		$r_1=\lcm(\alpha,\gamma)= \epsilon \lcm(\beta,\delta)$.
		
		We deduce that
		\begin{equation*}
			N(m,n,r,s)=\sum F(u,v),
		\end{equation*}
		where the sum is over all 
		\begin{equation*}
			(b,x,e,d,z,\ell, b_1,x_1,e_1,d_1,z_1,\ell_1, \beta,X,\epsilon,\delta,Z,\lambda, \beta_1,X_1,\epsilon_1,
			\delta_1,Z_1,\lambda_1)\in \N^{24}
		\end{equation*}
		satisfying the following conditions:
		
		{\rm (1a)}\ $bxe=m$,\quad
		
		{\rm (2a)}\ $dze=n$,\quad
		
		{\rm (3a)}\ $b_1x_1e_1=\gcd(b,d)$, \quad
		
		{\rm (4a)}\ $d_1z_1e_1= e \lcm(b,d)$,
		
		{\rm (4b)}\ $1\le \ell \le e, \gcd(\ell,e)=1$,
		
		{\rm (5a)}\ $\beta X \epsilon=r$, \quad
		
		{\rm (6a)}\ $\delta Z\epsilon=s$, \quad
		
		{\rm (7a)}\ $\beta_1X_1\epsilon_1=\gcd(\beta,\delta)$, \quad
		
		{\rm (8a)}\ $\delta_1Z_1\epsilon_1=\epsilon\lcm(\beta,\delta)$,
		
		{\rm (8b)}\ $1\le \ell_1 \le e_1, \gcd(\ell_1,e_1)=1$,
		
		{\rm (9a)}\ $\gcd(x_1,z_1)=\gcd(X_1,Z_1)=u$, \quad
		
		{\rm (10a)}\ $e_1\lcm(x_1,z_1)= \epsilon_1\lcm(X_1,Z_1)=v$,
		
		{\rm (10b)}\ $1\le \lambda \le \epsilon, \gcd(\lambda,\epsilon)=1$,
		
		{\rm (10c)}\ $1\le \lambda_1 \le \epsilon_1, \gcd(\lambda_1,\epsilon_1)=1$.
		
		Note that, according to (4b), (8b), (10b) and (10c), the variables $\ell$, $\ell_1$, $\lambda$ and $\lambda_1$ take $\varphi(e)$, $\varphi(e_1)$,
		$\varphi(\epsilon)$, respectively $\varphi(\epsilon_1)$ distinct values, independently from the other variables.
		
		Now formula \eqref{sum_4} follows by introducing the notation
		
		$x_1:=b$, $x_2:=x$, $x_3:=e$, $x_4:=d$, $x_5:=z$,
		$y_1:=b_1$, $y_2:=x_1$, $y_3:=e_1$, $y_4:=d_1$, $y_5:=z_1$
		$z_1:=\beta$, $z_2:=X$, $z_3:=\epsilon$, $z_4:=\delta$, $z_5:=Z$,
		$t_1:=\beta_1$, $t_2:=X_1$, $t_3:=\epsilon_1$, $t_4:=\delta_1$, $t_5:=Z_1$.
	\end{proof} 
	
	We remark that the number $F(m,n)$ of automorphisms of the group $\Z_m \times \Z_n$, used above, 
	can be computed as follows. For every $m,n\in \N$,
	\begin{equation*}
		F(m,n)=\prod_p F(p^{\nu_p(m)}, p^{\nu_p(n)}),
	\end{equation*}
	hence $(m,n)\mapsto F(m,n)$ is a multiplicative arithmetic
	function of two variables. For prime powers $p^a, p^b$ one has, cf. \cite[Th.\
	4.1]{Hil}, \cite[Cor.\ 3]{GG2008},
	\begin{align*}
		F(p^a,p^b)  = \begin{cases} p^{2a} \varphi(p^a) \varphi(p^b), & \text{if $0\le a<b$;} \\ p^a \varphi_2(p^a) \varphi(p^a), & \text{if $0\le a=b$;}
		\end{cases}
	\end{align*}
	where $\varphi_2$ is the Jordan function of order $2$ given by $\varphi_2(n)=n^2\prod_{p\mid n} (1-\frac1{p^2})$. We deduce that
	$F(1,p^b)=\varphi(p^b)$ for $b\in \N_0$ and
	\begin{align*}
		F(p^a,p^b)  =\begin{cases} p^{3a+b} \left(1-\frac1{p}\right)^2, & \text{if $1\le a<b$;} \\
			p^{4a} \left(1-\frac1{p}\right)\left(1-\frac1{p^2}\right), & \text{if $1\le a=b$.}
		\end{cases}
	\end{align*}
	
	\begin{theorem} \label{Theorem_subgroups_order_k} For every $k,m,n,r,s\in
		\N$ such that $k\mid mnrs$, the number $N(m,n,r$, $s;k)$ is given by
		the sum \eqref{sum_4} with the additional constraint
		
		{\rm (11)} \ $x_1x_3x_4t_1t_3t_4=k$.
	\end{theorem}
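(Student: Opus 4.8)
The plan is to refine the Goursat correspondence set up in the proof of Theorem~\ref{Theorem_subgroups} by reading off the order of the resulting subgroup. Recall that each subgroup $K$ of $\Z_m\times\Z_n\times\Z_r\times\Z_s=G\times H$, with $G=\Z_m\times\Z_n$ and $H=\Z_r\times\Z_s$, is obtained from a $5$-tuple $(A,B,C,D,\Psi)$ with $B\nr A\le G$, $D\nr C\le H$ and $\Psi\colon A/B\to C/D$ an isomorphism, and that by part ii) of Goursat's lemma $|K|=|A|\cdot|D|=|B|\cdot|C|$. Hence the only thing I need to add to Theorem~\ref{Theorem_subgroups} is the requirement $|K|=k$ expressed through the $20$ renamed parameters $x_i,y_i,z_i,t_i$; the factor $F(u,v)$ and conditions (1)--(10) are untouched, because the set of admissible isomorphisms $A/B\to C/D$ does not depend on $|K|$.

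For the order I would invoke property iii) of the representation of the subgroups of $\Z_m\times\Z_n$ recalled in Section~\ref{Section_k_2}: the subgroup $A=K_{a,b,c,d,\ell}$ has order $ad$. Tracking the substitutions of the proof of Theorem~\ref{Theorem_subgroups}---$a/b=c/d=e$, so $a=be$, together with the renaming $x_1:=b$, $x_3:=e$, $x_4:=d$---gives $|A|=ad=x_1x_3x_4$. Applying the same property inside $C\simeq\Z_{r_1}\times\Z_{s_1}$ to $D\simeq K_{\alpha_1,\beta_1,\gamma_1,\delta_1,\lambda_1}$ gives $|D|=\alpha_1\delta_1$, and since $\alpha_1/\beta_1=\epsilon_1$ with $t_1:=\beta_1$, $t_3:=\epsilon_1$, $t_4:=\delta_1$, this is $|D|=t_1t_3t_4$. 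Therefore $|K|=|A|\cdot|D|=x_1x_3x_4t_1t_3t_4$, and imposing $|K|=k$ is exactly condition (11). Since $|K|$ divides $|G\times H|=mnrs$, the hypothesis $k\mid mnrs$ is automatically necessary, so no solutions are discarded; restricting the bijection of Theorem~\ref{Theorem_subgroups} to those $20$-tuples additionally satisfying (11) therefore counts precisely the subgroups of order $k$.

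The remaining work is purely bookkeeping. First I would double-check consistency of the chain of renamings; in particular, computing $|K|$ instead as $|B|\cdot|C|$ and using the parallel substitutions ($a_1=b_1e_1$, $\alpha=\beta\epsilon$, with $y_1:=b_1$, $y_3:=e_1$, $y_4:=d_1$, $z_1:=\beta$, $z_3:=\epsilon$, $z_4:=\delta$) gives $|K|=y_1y_3y_4\,z_1z_3z_4$, so under conditions (1)--(10) one automatically has $x_1x_3x_4t_1t_3t_4=y_1y_3y_4z_1z_3z_4$; it is worth remarking this, both to confirm that (11) is well posed and to note that it could equally be stated with the $y$'s and $z$'s. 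Second, the $p$-group specialization (the analogue of Corollary~\ref{Cor_subgroups}, namely Corollary~\ref{Cor_subgroups_order_k}) then drops out by restricting the arguments to prime powers, exactly as in the unrestricted case. I do not anticipate a genuine obstacle; the essential step is simply verifying the identity $|K|=|A|\cdot|D|=x_1x_3x_4t_1t_3t_4$ through the notational changes, which the ``order $=ad$'' formula makes immediate.
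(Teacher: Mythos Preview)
Your proposal is correct and follows essentially the same route as the paper: both invoke part ii) of Goursat's lemma to get $|K|=|A|\cdot|D|$, compute $|A|=ad=bde=x_1x_3x_4$ and $|D|=\alpha_1\delta_1=\beta_1\delta_1\epsilon_1=t_1t_3t_4$ via the renaming from Theorem~\ref{Theorem_subgroups}, and observe that the companion identity $|K|=|B|\cdot|C|$ is automatically implied by the remaining constraints. Your extra bookkeeping remark that the condition could equivalently be phrased as $y_1y_3y_4\,z_1z_3z_4=k$ is a nice sanity check the paper only alludes to.
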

	
	\begin{proof}
		According to the second part of Goursat's lemma, see Section \ref{Section_k_2}, we have 
		$k=|A||D|=|B||C|$. Here $|A|=ad=bde$, $|B|=a_1d_1=b_1d_1e_1$. Similarly,
		$|C|=\alpha \delta = \beta \delta \epsilon$, $|D|=\alpha_1 \delta_1= \beta_1 \delta_1 \epsilon_1$.
		Hence condition $k=|A||D|=|B||C|$ is equivalent to $bde\beta_1 \delta_1 \epsilon_1=k$ and to $x_1x_3x_4t_1t_3t_4=k$ with the changing of
		notation of above. Note that relation $k=|B||C|$ is a consequence of the other constraints and needs not be considered.
	\end{proof}
	
	\begin{corollary} \label{Cor_subgroups} For every $a,b,c,d\in \N_0$ we have
		\begin{align} \label{sum_4_primes}
			N(p^a,p^b,p^c,p^d)= \sum \varphi(p^{x_3}) \varphi(p^{y_3}) \varphi(p^{z_3}) \varphi(p^{t_3})
			F(p^u,p^v),
		\end{align}
		where the sum is over all
		\begin{equation*}
			(x_1,x_2,x_3,x_4,x_5,y_1,y_2,y_3,y_4,y_5,z_1,z_2,z_3,z_4,z_5,t_1,t_2,t_3,t_4,t_5)\in
			\N_0^{20} 
		\end{equation*}
		satisfying the following $10$ conditions:
		
		{\rm (i)}\ $x_1+x_2+x_3=a$,\quad
		
		{\rm (ii)}\ $x_3+x_4+x_5=b$,\quad
		
		{\rm (iii)}\ $y_1+y_2+y_3=\min(x_1,x_4)$, \quad
		
		{\rm (iv)}\ $y_3+y_4+y_5=x_3+\max(x_1,x_4)$,
		
		{\rm (v)}\ $z_1+z_2+z_3=c$, \quad
		
		{\rm (vi)}\ $z_3+z_4+z_5=d$, \quad
		
		{\rm (vii)}\ $t_1+t_2+t_3=\min(z_1,z_4)$, \quad
		
		{\rm (viii)}\ $t_3+t_4+t_5=z_3+\max(z_1,z_4)$,
		
		{\rm (ix)}\ $\min(y_2,y_5)=\min(t_2,t_5)=:u$, \quad
		
		{\rm (x)}\ $y_3+\max(y_2,y_5)= t_3+ \max(t_2,t_5)=:v$.
	\end{corollary}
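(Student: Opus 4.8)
The plan is to read off Corollary~\ref{Cor_subgroups} directly from Theorem~\ref{Theorem_subgroups} by substituting $m=p^a$, $n=p^b$, $r=p^c$, $s=p^d$ in formula~\eqref{sum_4}; the only content is translating the multiplicative conditions (1)--(10) into the additive conditions (i)--(x) at the level of exponents.

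First I would note that every positive divisor of a power of $p$ is again a power of $p$, so when $m,n,r,s$ are powers of $p$ each of the twenty summation variables in~\eqref{sum_4} is forced to be a power of $p$. Writing each such variable as $p$ raised to a nonnegative exponent sets up a bijection between the index set of~\eqref{sum_4} (tuples in $\N^{20}$) and the index set of~\eqref{sum_4_primes} (tuples of exponents in $\N_0^{20}$); after relabelling the exponents by the same letters, I then have to check that (1)--(10) go over into (i)--(x). This is the ``dictionary'' $\gcd\leftrightarrow\min$, $\lcm\leftrightarrow\max$, product $\leftrightarrow$ sum for powers of a single prime: conditions (1), (2), (5), (6) become (i), (ii), (v), (vi); since $\gcd(p^x,p^{x'})=p^{\min(x,x')}$, conditions (3), (7) become (iii), (vii); since $x_3\lcm(x_1,x_4)=p^{\,x_3+\max(x_1,x_4)}$, conditions (4), (8) become (iv), (viii); and in (9), (10) the common values $u=\gcd(y_2,y_5)=\gcd(t_2,t_5)$ and $v=y_3\lcm(y_2,y_5)=t_3\lcm(t_2,t_5)$ are themselves powers of $p$, so the equalities become $\min(y_2,y_5)=\min(t_2,t_5)=:u$ and $y_3+\max(y_2,y_5)=t_3+\max(t_2,t_5)=:v$ in exponents, which are (ix) and (x).

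It then remains only to rewrite the summand of~\eqref{sum_4}: each $\varphi(x_3)$ becomes $\varphi(p^{x_3})$, and similarly for $y_3,z_3,t_3$; and $F(u,v)$, the number of automorphisms of $\Z_u\times\Z_v$, becomes $F(p^u,p^v)$ because $u$ and $v$ are powers of $p$ by the translation of (9), (10) (with its explicit value available from the formula for $F(p^a,p^b)$ recorded after the proof of Theorem~\ref{Theorem_subgroups}). Substituting everything back gives~\eqref{sum_4_primes}. I do not expect a real obstacle here; the only thing to be slightly careful about is the index shift between $\N$-valued divisors and $\N_0$-valued exponents, but this is a bijection and so nothing is gained or lost.
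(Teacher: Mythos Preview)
Your proposal is correct and follows exactly the paper's approach: the paper's proof is the single sentence ``Apply Theorem~\ref{Theorem_subgroups} in the case $(m,n,r,s)=(p^a,p^b,p^c,p^d)$,'' and you have simply spelled out the routine translation of divisors to exponents that this entails.
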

	
	\begin{proof}
		Apply Theorem \ref{Theorem_subgroups} in the case $(m,n,r,s)=(p^a,p^b,p^c,p^d)$.
	\end{proof}
	
	\begin{corollary} \label{Cor_subgroups_order_k} For every $a,b,c,d,k\in \N_0$
		with $k\le a+b+c+d$ the number $N(p^a,p^b,p^c,p^d;p^k)$ is given by
		the sum \eqref{sum_4_primes} with the additional constraint
		
		{\rm (xi)} \ $x_1+x_3+x_4+t_1+t_3+t_4=k$.
	\end{corollary}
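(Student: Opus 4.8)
The plan is to obtain this as the exponent-translation of Theorem~\ref{Theorem_subgroups_order_k}, in exactly the way that Corollary~\ref{Cor_subgroups} is the exponent-translation of Theorem~\ref{Theorem_subgroups}. First I would apply Theorem~\ref{Theorem_subgroups_order_k} to the tuple $(m,n,r,s)=(p^a,p^b,p^c,p^d)$ and to subgroup order $p^k$; the hypothesis $p^k \mid mnrs=p^{a+b+c+d}$ required there is precisely the condition $k\le a+b+c+d$ assumed here.

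Next I would recall how the $10$ defining relations of the sum \eqref{sum_4} degenerate when every variable is a power of $p$. In a sum ranging over divisors of powers of $p$ one may write each variable $x_i,y_i,z_i,t_i$ as $p$ raised to a nonnegative integer exponent (reusing the names $x_i,\dots,t_i$ for the exponents, as in Corollary~\ref{Cor_subgroups}); then multiplicative relations of the shape $uvw=N$ become additive relations on exponents, $\gcd$ becomes $\min$, and $\lcm$ becomes $\max$. This turns conditions (1)--(10) into (i)--(x) of \eqref{sum_4_primes} and leaves the summand $\varphi(p^{x_3})\varphi(p^{y_3})\varphi(p^{z_3})\varphi(p^{t_3})F(p^u,p^v)$ intact --- exactly the content of Corollary~\ref{Cor_subgroups}. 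The only new ingredient is the extra constraint (11) of Theorem~\ref{Theorem_subgroups_order_k}, namely $x_1x_3x_4t_1t_3t_4=k$: after replacing $k$ by $p^k$ and each of $x_1,x_3,x_4,t_1,t_3,t_4$ by the corresponding power of $p$, it reads $p^{x_1+x_3+x_4+t_1+t_3+t_4}=p^k$, i.e.\ condition (xi). Adjoining (xi) to (i)--(x) gives the stated formula.

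I do not expect any real obstacle here beyond bookkeeping: the substantive work lies in Theorems~\ref{Theorem_subgroups} and \ref{Theorem_subgroups_order_k} and in Corollary~\ref{Cor_subgroups}, and this statement is their routine specialization. The one point worth flagging is the (harmless) notational clash between ``$k$'' as the number of direct factors elsewhere in the chapter and ``$k$'' here as the exponent in the subgroup order $p^k$; I would make explicit that the $k$ occurring in (xi) is this exponent. As with Corollary~\ref{Cor_subgroups}, it may also be worth noting in passing that the formula makes it transparent that $N(p^a,p^b,p^c,p^d;p^k)$ is a polynomial in $p$ with nonnegative integer coefficients, since each factor $\varphi(p^j)$ and each $F(p^j,p^\ell)$ is such a polynomial and the index set of the sum is finite.
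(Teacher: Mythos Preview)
Your argument is correct and is exactly the paper's approach: the author's proof is the single line ``Apply Theorem~\ref{Theorem_subgroups_order_k} in the case $(m,n,r,s)=(p^a,p^b,p^c,p^d)$, $k:=p^k$,'' and you have simply unpacked that specialization.

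One caveat about your closing aside: the claim that the formula makes nonnegativity of the coefficients ``transparent'' because each $\varphi(p^j)$ and $F(p^j,p^\ell)$ is a polynomial in $p$ with nonnegative integer coefficients is not right --- for $j\ge 1$ one has $\varphi(p^j)=p^j-p^{j-1}$, and $F(p^a,p^b)$ likewise involves factors such as $(1-1/p)^2$ after clearing denominators. The nonnegativity of the coefficients of $N(p^a,p^b,p^c,p^d;p^k)$ is true (it is a general fact for abelian $p$-groups, cf.\ Butler), but it does not fall out of \eqref{sum_4_primes} term by term; I would drop that remark or phrase it only as ``polynomial in $p$ with integer coefficients.''
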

	
	\begin{proof}
		Apply Theorem \ref{Theorem_subgroups_order_k} in the case $(m,n,r,s)=(p^a,p^b,p^c,p^d)$, $k:=p^k$.
	\end{proof}
	
	Consider the polynomials $N(p^a,p^b,p^c,p^d)$ and $N(p^a,p^b,p^c,p^d;p^k)$, where $p$ is prime, 
	$1\le a\le b \le c \le d$, $0\le k \le a+b+c+d=:n$. It is known that $N(p^a,p^b,p^c,p^d;p^k)= 
	N(p^a,p^b,p^c,p^d;p^{n-k})$ for every
	$0\le k\le n$. The following unimodality result is also known: $N(p^a,p^b,p^c,p^d;p^k) - N(p^a,p^b,p^c,p^d;p^{k-1})$ has non-negative coefficients for every $1\le k \le \lfloor n/2 \rfloor$. More 
	generally, similar properties hold for every abelian $p$-group.
	See Butler \cite{But1987}, Takegahara \cite{Tak1998}.
	
	In what follows we give the polynomials representing the values of $N(p^a,p^b$, $p^c,p^d)$ and $N_{p^k}(p^a,p^b,p^c,p^d)$,
	where $1\le a\le b \le c \le d\le 2$, also $a=b=c=d=3$, with $0\le k\le \lfloor n/2 \rfloor$, computed using Corollaries \ref{Cor_subgroups} and \ref{Cor_subgroups_order_k}.
	Some more tables are given in \cite{Tot2016}.

	\[
	\vbox{\offinterlineskip  \hrule \halign{ \strut \vrule  $\ # \
			$ \hfill & \vrule  $\ #  $ \hfill  \vrule \cr
			N(p,p,p,p) & 5+ 3p+4p^2+3p^3+p^4 \cr \noalign{\hrule \hrule}
			N(p,p,p,p;1)& 1  \cr \noalign{\hrule}
			N(p,p,p,p;p) & 1+p+ p^2+p^3  \cr \noalign{\hrule}
			N(p,p,p,p;p^2) & 1+p+2p^2+p^3+p^4 \cr} \hrule}
	\]
	
	\[
	\vbox{\offinterlineskip  \hrule \halign{ \strut \vrule  $\ # \
			$ \hfill & \vrule  $\ #  $ \hfill  \vrule \cr
			N(p,p,p,p^2) & 6+ 4p+6p^2+6p^3+2p^4 \cr \noalign{\hrule \hrule}
			N(p,p,p,p^2;1)& 1  \cr \noalign{\hrule}
			N(p,p,p,p^2;p) & 1+p+ p^2+p^3  \cr \noalign{\hrule}
			N(p,p,p,p^2;p^2) & 1+p+2p^2+2p^3+p^4 \cr} \hrule}
	\]
	
	\[
	\vbox{\offinterlineskip  \hrule \halign{ \strut \vrule  $\ # \
			$ \hfill & \vrule  $\ #  $ \hfill  \vrule \cr
			N(p,p,p^2,p^2) & 7+ 5p+8p^2+9p^3+6p^4+p^5 \cr \noalign{\hrule \hrule}
			N(p,p,p^2,p^2;1)& 1  \cr \noalign{\hrule}
			N(p,p,p^2,p^2;p) & 1+p+ p^2+p^3  \cr \noalign{\hrule}
			N(p,p,p^2,p^2;p^2) & 1+p+2p^2+2p^3+2p^4 \cr \noalign{\hrule}
			N(p,p,p^2,p^2;p^3) & 1+p+2p^2+3p^3+2p^4+p^5  \cr} \hrule}
	\]
	
	\[
	\vbox{\offinterlineskip  \hrule \halign{ \strut \vrule  $\ # \
			$ \hfill & \vrule  $\ #  $ \hfill  \vrule \cr
			N(p,p^2,p^2,p^2) & 8+ 6p+10p^2+12p^3+10p^4+6p^5+2p^6 \cr \noalign{\hrule \hrule}
			N(p,p^2,p^2,p^2;1)& 1  \cr \noalign{\hrule}
			N(p,p^2,p^2,p^2;p) & 1+p+ p^2+p^3  \cr \noalign{\hrule}
			N(p,p^2,p^2,p^2;p^2) & 1+p+2p^2+2p^3+2p^4+p^5 \cr \noalign{\hrule}
			N(p,p^2,p^2,p^2;p^3) & 1+p+2p^2+3p^3+3p^4+2p^5+p^6 \cr} \hrule}
	\]
	
	\[
	\vbox{\offinterlineskip  \hrule \halign{ \strut \vrule  $\ # \
			$ \hfill & \vrule  $\ #  $ \hfill  \vrule \cr
			N(p^2,p^2,p^2,p^2) & 9+ 7p+12p^2+15p^3+14p^4+11p^5+9p^6+3p^7+p^8 \cr \noalign{\hrule \hrule}
			N(p^2,p^2,p^2,p^2;1)& 1  \cr \noalign{\hrule}
			N(p^2,p^2,p^2,p^2;p) & 1+p+ p^2+p^3  \cr \noalign{\hrule}
			N(p^2,p^2,p^2,p^2;p^2) & 1+p+2p^2+2p^3+2p^4+p^5+p^6 \cr \noalign{\hrule}
			N(p^2,p^2,p^2,p^2;p^3) & 1+p+2p^2+3p^3+3p^4+3p^5+2p^6+p^7 \cr \noalign{\hrule}
			N(p^2,p^2,p^2,p^2;p^4) & 1+p+2p^2+3p^3+4p^4+3p^5+3p^6+p^7+p^8 \cr} \hrule}
	\]
	
	\[
	\vbox{\offinterlineskip  \hrule \halign{ \strut \vrule  $\ # \ $
			\hfill & \vrule  $\ #  $ \hfill  \vrule \cr
			N(p^3,p^3,p^3,p^3) & 13+
			11p+20p^2+27p^3+30p^4+31p^5+37p^6+30p^7 \cr
			& +26p^8 +18p^9 +9p^{10}+3p^{11}+p^{12}
			\cr \noalign{\hrule \hrule}
			N(p^3,p^3,p^3,p^3;1)& 1  \cr \noalign{\hrule}
			N(p^3,p^3,p^3,p^3;p) & 1+p+ p^2+p^3  \cr \noalign{\hrule}
			N(p^3,p^3,p^3,p^3;p^2) & 1+p+2p^2+2p^3+2p^4+p^5+p^6 \cr \noalign{\hrule}
			N(p^3,p^3,p^3,p^3;p^3) & 1+p+2p^2+3p^3+3p^4+3p^5+3p^6+2p^7+p^8+p^9 \cr \noalign{\hrule}
			N(p^3,p^3,p^3,p^3;p^4) & 1+p+2p^2+3p^3+4p^4+4p^5+5p^6+4p^7+4p^8+2p^9 \cr
			& + p^{10} \cr \noalign{\hrule}
			N(p^3,p^3,p^3,p^3;p^5) & 1+p+2p^2+3p^3+4p^4+5p^5+6p^6+6p^7+5p^8+4p^9 \cr
			& +2p^{10} +p^{11} \cr \noalign{\hrule}
			N(p^3,p^3,p^3,p^3;p^6) & 1+p+2p^2+3p^3+4p^4+5p^5+7p^6+6p^7+6p^8+4p^9 \cr
			& +3p^{10} + p^{11}+p^{12}\cr} \hrule}
	\]
	
	In our preprint \cite{Tot2016} we have formulated the following conjectures, confirmed by the above examples.
	For every $1\le a\le b \le c \le d$, the degree of the polynomial $N(p^a,p^b,p^c,p^d)$ is $2a+b+c$.
	For every $1\le m$ the degree of the polynomial $N(p^m,p^m,p^m,p^m)$ is $4m$ and its leading coefficient is $1$. 
	
	These conjectures have been answered in the affirmative by Chew, Chin and Lim \cite{CCL2020}. They used some different arguments, namely by constructing a recurrence relation for the number of subgroups of rank four $p$-groups, they considered the number of subgroups contained in the maximal subgroups.
	They also deduced an explicit formula for $N(p^a,p^b,p^c,p^d)$, having main term $(d-c+1)(b-a+1)p^{2a+b+c}$, where $1\le a\le b\le c\le d$. 
	
	Admasu and Sehgal \cite{AS2021} used a different approach and found an explicit formula for 
	the polynomials $N(p^m,p^m,p^m,p^m)$ with $m\in \N$, also proving the second conjecture of above.
	
	The values of $N(n):=N(n,n,n,n)$ for $1\le n\le 30$ are given by the next table.
	
	\[
	\vbox{\offinterlineskip \hrule \halign{ \strut
			\vrule \hfill $\ # \ $ \hfill
			& \vrule \hfill $\ # \ $
			& \vrule \vrule  \hfill $\ # \ $ \hfill
			& \vrule \hfill $\ # \ $
			& \vrule \vrule \hfill $\ # \ $  \hfill
			& \vrule \hfill $\ # \ $  \vrule \cr
			n & N(n) & n & N(n) & n & N(n) \ \cr \noalign{\hrule}
			1 & 1         & 11 & 19\, 156      & 21  & 774\, 224   \cr \noalign{\hrule}
			2 & 67        & 12 & 420\, 396     & 22  & 1\, 283\, 452   \cr \noalign{\hrule}
			3 & 212       & 13 & 35\, 872      & 23  & 318\, 532  \cr \noalign{\hrule}
			4 & 1\, 983   & 14 & 244\, 684     & 24  & 9\, 187\, 868   \cr \noalign{\hrule}
			5 & 1\, 120   & 15 & 237\, 440     & 25 & 810\, 969  \cr \noalign{\hrule}
			6 & 14\, 204  & 16 & 821\, 335     & 26 & 2\, 403\, 424   \cr \noalign{\hrule}
			7 & 3\, 652   & 17 & 99\, 472      & 27 & 2\, 222\, 704 \cr \noalign{\hrule}
			8 & 43\, 339  & 18 & 1\, 610\, 211 & 28 & 7\, 241\, 916  \cr \noalign{\hrule}
			9 & 24\, 033  & 19 & 152\, 404     & 29 & 783\, 904   \cr \noalign{\hrule}
			10 & 75\, 040 & 20 & 2\, 220\, 960 & 30 & 15\, 908\, 480  \cr} \hrule}
	\]
	
	\section{Final remarks}
	
	Identity \eqref{total_number_cyclic_subgroups} by the author is representing the number $c(n_1,\ldots,n_k)$
	of cyclic subgroups of the group $\Z_{n_1} \times \cdots \times \Z_{n_k}$ with arbitrary $k\ge 2$.
	We note that it has also been cited, resp. applied by Kumar \cite{Kum2020,Kum2022}, Plavnik et. al. \cite{PSY2023}.  
	It is not known any similar compact identity for the total number $s(n_1,\ldots,n_k)$ of subgroups. 
	However, Ramar\'e \cite[Eq.\ (20)]{Ram2017} provided an explicit but involved formula to compute $s(n_1,\ldots,n_k)$ and also the corresponding 
	generalized  function \eqref{def_sigma_t}, by using the primary decomposition of  
	$\Z_{n_1} \times \cdots \times \Z_{n_k}$.
	
	We mention two other related known results. Let $G_{(p)}$ be an arbitrary $p$-group of type $\lambda= (\lambda_1,\ldots,\lambda_r)$, that is  $G_{(p)}\simeq \Z_{p^{\lambda_1}}\times \cdots \times \Z_{p^{\lambda_r}}$, where $\lambda_1\ge \ldots \ge \lambda_r\ge 1$.
	T\u{a}rn\u{a}uceanu \cite[Prop.\ 3.2]{Tar2010} proved that the number of maximal subgroups of $G_{(p)}$ is $1+p+\cdots +p^{k-1}$. Chew, Chin and Lim \cite[Prop.\ 5.1]{CCL2020} showed, by using a general recurrence relation for the number of subgroups of rank four, and extending their method to higher rank, that the number of subgroups of $G_{(p)}$ is $1+\lambda_1+\cdots +\lambda_k$ (mod $p$).    
	
	As remarked in the Introduction, an open problem is to find an asymptotic formula for the sum
	$\sum_{n_1,\ldots,n_k\le } s(n_1,\ldots,n_k)$ with $k\ge 3$. See Section \ref{Section_k_2} for known results on $\sum_{m,n\le x} s(m,n)$. 
	It would be also interesting to study asymptotic properties of the one variable function $s(n,\ldots,n)$, and to find an asymptotic formula for the sum $\sum_{n\le x} s(n,\ldots,n)$ with $k\ge 4$. See the known formulas \eqref{asympt_s_n_n} and \eqref{asympt_s_n_n_n} corresponding to the cases $k=2$ and $k=3$, respectively.

\end{document}